\DeclareMathOperator{\Real}{Re}
\DeclareMathOperator{\gcdOp}{gcd}
\DeclareMathOperator{\rank}{rank}
\DeclareMathOperator{\Ker}{Ker}
\DeclareMathOperator{\Hess}{Hess}
\DeclareMathOperator{\Aut}{Aut}
\DeclareMathOperator{\Stab}{Stab}
\DeclareMathOperator{\Vol}{Vol}
\newcommand{\C}{\mathbb{C}}
\newcommand{\R}{\mathbb{R}}
\newcommand{\HH}{\mathbb{H}}
\newcommand{\OO}{\mathbb{O}}
\newcommand{\A}{\mathbb{A}}
\setlist{nosep}
\theoremstyle{thmstyleone}%
\newtheorem{theorem}{Theorem}[section]
\newtheorem{lemma}[theorem]{Lemma}
\newtheorem{corollary}[theorem]{Corollary}
\newtheorem{proposition}[theorem]{Proposition}
\newtheorem{maintheorem}{Main Theorem}
\theoremstyle{thmstylethree}%
\newtheorem{definition}[theorem]{Definition}
\newtheorem{remark}[theorem]{Remark}
\begin{document}

\title[Dynamical Non-Commutative Algebraic Geometry]{Dynamical Non-Commutative Algebraic Geometry:\\ Inflation, Bifurcation, and the Dynamics of Collapse across Division Algebras}

\author*[1,2]{\fnm{Pau} \sur{Amaro Seoane}}\email{amaro@upv.es}

\affil*[1]{\orgdiv{Department de Matemàtiques Aplicades}, \orgname{Universitat Politècnica de València}, \orgaddress{\street{C/Vera s/n}, \city{València}, \postcode{46022}, \country{Spain}}}

\affil[2]{\orgdiv{Max Planck Institute for Extraterrestrial Physics}, \orgaddress{\street{Giessenbachstra{\ss}e 1}, \city{Garching}, \postcode{85748}, \country{Germany}}}

\abstract{We develop a framework for dynamical non-commutative algebraic geometry (DNCAG) by analyzing the evolution and stability of polynomial root manifolds in real normed division algebras ($\HH$ and $\OO$). We establish a Generalized Inflation Theorem, demonstrating that for central polynomials, the root set forms a homogeneous space $G/H$, where $G$ is the automorphism group of the algebra ($SO(3)$ for $\HH$, $G_2$ for $\OO$). This mechanism generates continuous geometry from non-commutativity. We analyze the dynamics under central modulation (breathing modes), classifying topological bifurcations ($\Delta=0$). We then analyze the topological collapse induced by non-central perturbations, governed by symmetry reduction. We utilize the Localization Theorem (Gordon-Motzkin) to explain the alignment of roots with coefficient subalgebras. We formalize the dynamics of collapse using gradient flow on the potential landscape $\mathcal{V}(x) = \|P(x)\|^2$, characterizing it as a deformation retract and proving that the collapse timescale exhibits critical slowing down with quadratic scaling ($T_{\rm collapse} \propto \epsilon^{-2}$). Finally, we introduce a thermodynamic formalism, proving an Entropy Scaling Law that rigorously characterizes the collapse as a symmetry-breaking phase transition.}

\keywords{Non-Commutative Geometry, Division Algebras, Dynamical Systems, Bifurcation Theory, Gradient Flow, Symmetry Breaking, Quaternions, Octonions}

\maketitle

\section{Introduction}

The study of polynomial roots over non-commutative division algebras has a rich history, dating back to the foundational work on quaternions ($\HH$) by Niven \cite{niven1941equations} and Eilenberg and Niven \cite{eilenberg1944fundamental}. A key observation, formalized by Gordon and Motzkin \cite{gordon1965zeros}, is that the algebraic structure of the underlying space profoundly affects the geometry of the solution set (see also \cite{lam2001first}). The failure of commutativity (and associativity in the octonions $\OO$) induces a dimensional inflation of the root set. For central polynomials (coefficients in $\R$), 0-dimensional sets in $\C$ inflate into continuous manifolds ($S^2$ in $\HH$, $S^6$ in $\OO$). The associated symmetry group transitions from a discrete group to a compact Lie group ($SO(3)$ or $G_2$), arising from the action of the automorphism group.

This paper introduces a dynamical framework, Dynamical Non-Commutative Algebraic Geometry (DNCAG), where we treat the polynomial coefficients as parameters. We move beyond the static characterization of root sets to analyze their evolution, stability, and bifurcations. While the underlying algebraic structures are established, our approach utilizes tools from dynamical systems (cf. \cite{guckenheimer1983nonlinear}), algebraic deformation theory (cf. \cite{gerstenhaber1964deformation}), and Morse-Bott theory (cf. \cite{bott1954nondegenerate}) to analyze the stability of these structures under perturbations.

The motivation for developing DNCAG extends beyond pure mathematics into theoretical physics. The geometric structures analyzed here—spherical manifolds arising from algebraic constraints—are ubiquitous. In quaternionic quantum mechanics \cite{adler1995quaternionic}, these manifolds may represent parameter spaces or emergent symmetries. The dynamical collapse of topology offers a novel algebraic model for phenomena involving dimensional reduction and symmetry breaking, such as those hypothesized near black hole singularities or characterizing transitions between topological phases of matter. Furthermore, the extension to octonions is motivated by their potential role in unified theories and their unique algebraic properties \cite{baez2002octonions}.

Our main original contributions are:
\begin{enumerate}
    \item The analysis of central dynamics (breathing modes), including spectral characterization of non-linear coupling and the classification of dynamics near central bifurcations (transversal vs. tangential crossings).
    \item The formalization of the topological collapse dynamics using gradient flow, including the proof of critical slowing down and the analysis of basin decomposition.
    \item The introduction of a thermodynamic formalism (Gibbs measure, Entropy Scaling Law, Order Parameter) to characterize the algebraic collapse as a statistical phase transition.
\end{enumerate}

\subsection{Main Results}

The central contribution of this paper is the characterization of the topological instability induced by non-central perturbations across division algebras.

\begin{definition}[Analytic Deformation]\label{def:analytic_deformation}
Let $\A$ be a real division algebra. Let $P_0(x) \in \R[x]$ be a central polynomial. Let $G_P$ be the automorphism stabilizer of $P$ (Definition~\ref{def:aut_stabilizer}). A generically non-central analytic deformation is a family of polynomials $P_\epsilon(x) \in \A[x]$ parameterized by $\epsilon \in \R$, such that the coefficients depend analytically on $\epsilon$, $P_\epsilon(x) = P_0(x)$ when $\epsilon=0$, and $G_{P_\epsilon}$ is trivial for $\epsilon \neq 0$ in a punctured neighborhood of $\epsilon=0$.
\end{definition}

\begin{maintheorem}[Dynamical Non-Commutative Bifurcation]\label{thm:main_bifurcation}
Let $P_0(x)$ be a central polynomial over $\A \in \{\HH, \OO\}$ such that its root variety $V_0 = Z(P_0)$ contains a non-real manifold of dimension $d_{\rm M} > 0$. Let $P_\epsilon(x)$ be a generically non-central analytic deformation. Let $V_\epsilon = Z(P_\epsilon)$. Let $d_{\rm A} = \dim(\A)$.
\begin{enumerate}
    \item \textbf{Topological Collapse:} The Hausdorff dimension of the root variety changes discontinuously at $\epsilon=0$.
    \item \textbf{Algebraic Singularity:} The central fiber $V_0$ is characterized by a Jacobian rank deficiency: $\rank(J_{P_0}(x)) = d_{\rm A} - d_{\rm M}$.
    \item \textbf{Dynamical Retraction:} Let $\mathcal{V}_\epsilon(x) = \|P_\epsilon(x)\|^2$. The gradient flow $\dot{x} = -\nabla\mathcal{V}_\epsilon(x)$ realizes the collapse as a deformation retract. The timescale exhibits critical slowing down: $T_{\rm collapse} \propto \epsilon^{-2}$.
    \item \textbf{Thermodynamic Phase Transition:} In a statistical ensemble at temperature $T$, the collapse manifests as a phase transition. An alignment order parameter $m(\epsilon,T)$ exhibits a discontinuity at $(\epsilon, T) = (0, 0)$.
\end{enumerate}
\end{maintheorem}

\begin{proof}
We synthesize the results established in the subsequent sections.

\textbf{Part 1 (Topological Collapse):} For $\epsilon=0$, by the Dimensional Inflation Law (Corollary~\ref{cor:inflation_law}), $d_{\rm M} = d_{\rm A} - 2$. For $\epsilon \neq 0$, $G_{P_\epsilon}$ is trivial. By the Generalized Symmetry Reduction Theorem (Theorem~\ref{thm:generalized_symmetry_reduction}), the roots are isolated ($d_{\rm M}=0$). The discontinuity follows (Proposition~\ref{prop:collapse}).

\textbf{Part 2 (Algebraic Singularity):} The tangent space $T_x V_0$ is related to $\Ker(J_{P_0}(x))$. The rank deficiency follows from the positive dimension of the manifold (Theorem~\ref{thm:jacobian_singularity}).

\textbf{Part 3 (Dynamical Retraction):} The potential $\mathcal{V}_\epsilon(x)$ is real-analytic and coercive. The Lojasiewicz gradient inequality \cite{lojasiewicz1963propriete} guarantees convergence of the gradient flow (Theorem~\ref{thm:deformation_retract}). The timescale scaling $T_{\rm collapse} \propto \epsilon^{-2}$ is derived from the quadratic scaling of the restricted potential (Theorem~\ref{thm:collapse_timescale}) and the generic Lojasiewicz exponent (Proposition~\ref{prop:lojasiewicz_exponent}).

\textbf{Part 4 (Thermodynamic Phase Transition):} We define the Gibbs measure (Definition~\ref{def:gibbs_measure}) and the alignment order parameter $m(\epsilon, T)$ (Definition~\ref{def:order_parameter}). In the limit $T \to 0$, the measure concentrates on $V_\epsilon$. For $\epsilon=0$, symmetry implies $m(0, 0) = 1/(d_{\rm A}-1)$. For $\epsilon \neq 0$, the Localization Theorem (Theorem~\ref{thm:algebraic_alignment}) implies alignment, $m(\epsilon, 0) = 1$. The discontinuity characterizes the phase transition (Theorem~\ref{thm:phase_transition}).
\end{proof}

\section{Algebraic Structure and Dimensional Inflation}

\subsection{The Baseline: Rigid Geometry in $\C$}

In $\C$, the geometry of the roots of a lacunary polynomial is constrained by commutativity, resulting in discrete rotational symmetry ($C_d$).

\begin{theorem}[Geometric Rigidity in $\C$]\label{thm:symmetry_gcd}
Let $P(z) \in \C[z]$ be lacunary. Let $d = \gcdOp(\{\text{exponents}\})$. If $d > 1$, the set of roots $Z(P)$ is invariant under the action of the cyclic group $C_d$.
\end{theorem}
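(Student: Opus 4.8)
The plan is to reduce $P$ to a polynomial in the variable $w = z^d$ and then exploit the $d$-fold symmetry of the power map $z \mapsto z^d$. Write $P(z) = \sum_k a_k z^{n_k}$ with all $a_k \neq 0$, so that $\{n_k\}$ is precisely the set of exponents occurring in $P$. By hypothesis $d = \gcdOp(\{n_k\})$ divides every $n_k$, hence setting $m_k = n_k/d$ (a non-negative integer) and $Q(w) = \sum_k a_k w^{m_k} \in \C[w]$ yields the factorization $P(z) = Q(z^d)$ as an identity in $\C[z]$. This step is purely formal and is where the common-divisor hypothesis is used.

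Next I would introduce the action and check invariance. Let $\zeta = e^{2\pi i/d}$ be a primitive $d$-th root of unity, and let $C_d = \langle \zeta \rangle \subset \C^\times$ act on $\C$ by multiplication, $z \mapsto \zeta^j z$. Since $(\zeta^j z)^d = \zeta^{jd} z^d = z^d$, the power map is constant on each $C_d$-orbit, so $P(\zeta^j z) = Q((\zeta^j z)^d) = Q(z^d) = P(z)$ for every $z$ and every $j$. In particular, if $z_0 \in Z(P)$ then $P(\zeta^j z_0) = P(z_0) = 0$, so $\zeta^j z_0 \in Z(P)$; this is exactly the assertion that $Z(P)$ is invariant under $C_d$.

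The argument is essentially formal, so there is no real obstacle; the only point needing a word of care is bookkeeping about which exponents feed the gcd. If $P$ has a nonzero constant term then $0 \in \{n_k\}$, but $\gcdOp$ of a set containing $0$ is the gcd of the remaining exponents, and $0$ is divisible by every integer, so the factorization $P(z) = Q(z^d)$ and hence the conclusion are unaffected; the lacunary hypothesis itself plays no role beyond fixing the setting in which a nontrivial $d$ naturally appears. One may additionally remark that $d$ is the largest integer for which invariance is forced by the exponent pattern alone, and that the $C_d$-action on $Z(P)$ need not be free (a root at the origin, if present, is fixed), but neither refinement is required for the stated result.
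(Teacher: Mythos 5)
Your proof is correct, and it is the standard argument: factor $P(z) = Q(z^d)$ using the hypothesis that $d$ divides every occurring exponent, then observe that $z \mapsto z^d$ is constant on $C_d$-orbits, so $P$ itself is $C_d$-invariant as a function and in particular its zero set is. The paper states this theorem without proof, treating it as the known baseline of rigid (discrete) symmetry in $\C$ against which the inflation phenomena in $\HH$ and $\OO$ are contrasted, so there is no paper proof to compare against; your argument is exactly the one a reader would be expected to supply. Your side remarks are also accurate: the lacunary hypothesis plays no logical role beyond making $d>1$ plausible, the $\gcd$-with-$0$ convention is handled correctly, and the origin (if a root) is a fixed point so the action need not be free.
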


\subsection{Inflation in Division Algebras and Group Actions}

In non-commutative division algebras, the $C_d$ symmetry is generally broken. For central polynomials, a continuous symmetry emerges, governed by the algebra's automorphisms. We assume the standard polynomial definition $P(x) = \sum a_k x^k$, which is well-defined as $\HH$ and $\OO$ are power-associative.

\begin{definition}[Automorphism Group]
Let $\A$ be an algebra over $\R$. The automorphism group $\Aut(\A)$ is the group of all bijective linear maps $g: \A \to \A$ preserving multiplication: $g(xy) = g(x)g(y)$.
\end{definition}

\begin{theorem}[Generalized Inflation Theorem]\label{thm:generalized_inflation}
Let $\A$ be a finite-dimensional real division algebra. Let $P(x) \in \R[x]$. For any non-central root $\alpha \in \A \setminus Z(\A)$, the solution set $Z(P)$ contains a submanifold diffeomorphic to the homogeneous space $G/H$, where $G = \Aut(\A)$, and $H = \Stab_G(\alpha)$.
\end{theorem}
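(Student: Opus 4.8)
The plan is to exploit the elementary but decisive fact that evaluation of a \emph{real} polynomial commutes with every algebra automorphism, so that the automorphism orbit of a single root is automatically contained in the root set; the homogeneous-space description is then nothing more than the orbit--stabilizer theorem for smooth Lie group actions, and the only real work is checking that the orbit is an honest embedded submanifold.

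First I would verify the intertwining relation. Any $g \in \Aut(\A)$ is a bijective algebra homomorphism, so $g(1) = 1$, hence $g$ fixes $\mathbb{Q}\cdot 1$ pointwise, and by $\R$-linearity (automorphisms of a finite-dimensional real algebra are linear) it fixes all of $\R\cdot 1 \subseteq Z(\A)$. Since $\A$ is power-associative, $x^k$ is unambiguous and $g(x^k) = g(x)^k$ for all $k$; this is the one point where power-associativity (rather than full associativity) is genuinely used, which matters for $\OO$. Therefore, for $P(x) = \sum_k a_k x^k$ with $a_k \in \R$,
\[
P(g(x)) \;=\; \sum_k a_k\, g(x)^k \;=\; \sum_k g(a_k x^k) \;=\; g\!\left(\sum_k a_k x^k\right) \;=\; g(P(x)) \qquad \text{for all } x \in \A .
\]
In particular, if $\alpha \in Z(P)$ then $P(g(\alpha)) = g(P(\alpha)) = g(0) = 0$, so the whole orbit $G\cdot\alpha$ lies in $Z(P)$.

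Second I would invoke the Lie-theoretic structure of $G = \Aut(\A)$. Fixing a basis $(e_i)$ of $\A$, the automorphism condition $g(e_i e_j) = g(e_i)g(e_j)$ is a system of polynomial equations in the entries of $g$, so $\Aut(\A)$ is a closed subgroup of $\mathrm{GL}(\A)$ and hence a Lie group by Cartan's closed-subgroup theorem; for $\A \in \{\HH,\OO\}$ one has the standard identifications $G \cong SO(3)$ and $G \cong G_2$, which are compact and act smoothly on $\A$. The stabilizer $H = \Stab_G(\alpha)$ is a closed subgroup, so $G/H$ inherits a canonical smooth manifold structure, and the orbit map $\iota \colon G/H \to \A$, $gH \mapsto g(\alpha)$, is a well-defined, injective, smooth immersion. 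Compactness of $G$ makes the action proper, so orbits are embedded submanifolds and $\iota$ is a smooth embedding onto $G\cdot\alpha$; thus $G\cdot\alpha \subseteq Z(P)$ is an embedded submanifold of $\A$ diffeomorphic to $G/H$, as claimed.

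Finally, to confirm that this yields genuine dimensional inflation (and not a spurious point), I would observe that the common fixed-point set of $\Aut(\A)$ in $\A$ equals $Z(\A)$: for $\HH$ and $\OO$ this is $\R\cdot 1$, since $SO(3)$ on $\Img\HH \cong \R^3$ and $G_2$ on $\Img\OO \cong \R^7$ fix only the origin. Hence $\alpha \notin Z(\A)$ forces $H \subsetneq G$, so $\dim(G/H) \ge 1$ and the submanifold is positive-dimensional. I expect the main obstacle to be precisely the step from an immersed orbit to an embedded submanifold diffeomorphic to $G/H$: this is where compactness (or at least properness) of $\Aut(\A)$ is indispensable, and it is the reason the clean statement is really about normed division algebras; for a hypothetical exotic finite-dimensional real division algebra with non-compact or ill-behaved automorphism group one would have to argue more carefully, but for the cases of interest ($SO(3)$, $G_2$) compactness makes the conclusion automatic.
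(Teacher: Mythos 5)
Your proof follows the same essential route as the paper's: equivariance of polynomial evaluation under $\Aut(\A)$ for central coefficients, inclusion of the orbit $G\cdot\alpha$ in $Z(P)$, and the orbit--stabilizer theorem to identify the orbit with $G/H$. The difference is one of thoroughness rather than strategy: you spell out the intertwining $P(g(x))=g(P(x))$ and correctly isolate power-associativity as the property that makes $g(x^k)=g(x)^k$ meaningful for $\OO$, you verify that $\Aut(\A)$ is a Lie group, and you address whether the orbit is an \emph{embedded} submanifold rather than merely immersed — a point the paper's one-line invocation of orbit--stabilizer leaves implicit. You derive embeddedness from compactness of $G$, which settles $\HH$ ($SO(3)$) and $\OO$ ($G_2$) but, as you yourself flag, does not obviously cover the theorem as literally stated for an arbitrary finite-dimensional real division algebra, where compactness of $\Aut(\A)$ is not given. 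The cleanest way to close that residual gap without restricting to the normed case is to note that $\Aut(\A)$ is a real algebraic subgroup of $\mathrm{GL}(\A)$ (precisely the polynomial conditions $g(e_ie_j)=g(e_i)g(e_j)$ you wrote down), and orbits of algebraic group actions are locally closed, hence embedded submanifolds irrespective of compactness. Your final paragraph on the fixed-point set $\mathrm{Fix}(\Aut(\A))=Z(\A)$ guaranteeing $H\subsetneq G$ and hence positive orbit dimension is a useful addition that the paper defers to the corollary.
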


\begin{proof}
The automorphism group $G$ acts on $\A$. Since the coefficients of $P$ are real (central), they are fixed by $G$. If $\alpha$ is a root, then for any $g \in G$, $P(g(\alpha)) = g(P(\alpha)) = 0$. The orbit of $\alpha$ under $G$, $O(\alpha)$, is contained in $Z(P)$. By the Orbit-Stabilizer Theorem, $O(\alpha)$ is a smooth manifold diffeomorphic to $G/H$.
\end{proof}

\begin{corollary}[Dimensional Inflation Law]\label{cor:inflation_law}
Let $\A$ be a real normed division algebra ($\C, \HH, \OO$) of dimension $d_{\rm A} \ge 2$. The dimension of the root manifold $M_{x_0}$ is $d_{\rm M} = d_{\rm A} - 2$.
\end{corollary}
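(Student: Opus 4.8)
The plan is to identify the root manifold $M_{x_0}$ with the orbit $G\cdot x_0$ furnished by Theorem~\ref{thm:generalized_inflation} and to evaluate its dimension via the homogeneous-space formula $\dim(G/H) = \dim G - \dim H$ in each of the three normed division algebras. Thus the task reduces to pinning down the automorphism group $G = \Aut(\A)$ and the stabilizer $H = \Stab_G(x_0)$ of a non-central root $x_0$.

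First I would reduce the stabilizer to an action on imaginary parts. Every algebra automorphism $g$ fixes $1$ (from $g(1)g(1) = g(1)$ together with injectivity), hence fixes $\R \subseteq \A$ pointwise by $\R$-linearity; moreover $g$ preserves the minimal polynomial of $x$ over $\R$, so it preserves $\Real(\cdot)$ and the norm, i.e.\ $g$ is orthogonal and respects the splitting $\A = \R \oplus \Img(\A)$. Writing $x_0 = \Real(x_0) + \Img(x_0)$ with $\Img(x_0) \neq 0$ by non-centrality, we obtain $\Stab_G(x_0) = \Stab_G(\Img(x_0))$, so $M_{x_0}$ is the $G$-orbit of $\Img(x_0)$ inside the sphere of radius $\|\Img(x_0)\|$ in $\Img(\A) \cong \R^{d_{\rm A}-1}$.

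Then comes the case analysis. For $\A = \C$: the identity component of $\Aut(\C)$ is trivial, so $M_{x_0}$ is a point and $d_{\rm M} = 0 = d_{\rm A} - 2$, consistent with the classical rigid picture. For $\A = \HH$: $G = SO(3)$ acts on $\Img(\HH) \cong \R^3$ by the defining representation, transitively on spheres with point-stabilizer $SO(2)$, so $d_{\rm M} = \dim SO(3) - \dim SO(2) = 3 - 1 = 2 = d_{\rm A} - 2$. For $\A = \OO$: $G = G_2$ acts on $\Img(\OO) \cong \R^7$ by its $7$-dimensional irreducible representation, transitively on $S^6$, and the stabilizer of a unit imaginary octonion $u$ is $SU(3)$ (dimension $8$): the $6$-plane $W = \{1,u\}^\perp$ lies in $\Img(\OO)$, left multiplication $L_u$ preserves $W$ with $L_u^2 = -\mathrm{id}$ by alternativity (so $W \cong \C^3$), and an automorphism fixing $u$ commutes with $L_u|_W$ and preserves the metric and the induced holomorphic volume form, hence lies in $SU(3)$. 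Therefore $d_{\rm M} = \dim G_2 - \dim SU(3) = 14 - 8 = 6 = d_{\rm A} - 2$.

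In every case the orbit is in fact the entire sphere $S^{d_{\rm A}-2}$ of radius $\|\Img(x_0)\|$ about $\Real(x_0)$ in the affine subspace $\Real(x_0) + \Img(\A)$, which cross-checks the dimension count. I expect the one genuinely nontrivial input to be the octonionic stabilizer identification $\Stab_{G_2}(u) \cong SU(3)$ (equivalently, the transitivity of $G_2$ on $S^6$); I would either invoke the classical structure theory of $G_2$ (cf.~\cite{baez2002octonions}) or complete the complex-structure argument sketched above, whereas the $\C$ and $\HH$ cases are elementary.
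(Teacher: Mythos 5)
Your proof is correct and takes essentially the same route as the paper --- identifying the root manifold with the orbit $G/H$ furnished by Theorem~\ref{thm:generalized_inflation} and evaluating $\dim G - \dim H$ case by case --- though you supply the stabilizer identifications ($SO(2)\subset SO(3)$ for $\HH$, $SU(3)\subset G_2$ for $\OO$) that the paper's terse proof leaves implicit, the paper instead simply asserting that the orbits are the spheres $S^{d_{\rm A}-2}$. One cosmetic slip: for $\A=\C$ the full $C_2=\Aut(\C)$-orbit of a non-real root is the conjugate pair $\{z,\bar z\}$ rather than a single point, but the dimension count $d_{\rm M}=0=d_{\rm A}-2$ is of course unaffected.
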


\begin{proof}
The root manifolds are spheres $S^{d_{\rm A}-2}$.
$\C$ ($d_{\rm A}=2, G=C_2, d_{\rm M}=0$). $\HH$ ($d_{\rm A}=4, G=SO(3), d_{\rm M}=2$). $\OO$ ($d_{\rm A}=8, G=G_2, d_{\rm M}=6$).
\end{proof}

\begin{remark}[Associativity and Conjugation]
In $\HH$ (associative), the Skolem-Noether theorem implies $\Aut(\HH)$ consists of inner automorphisms (conjugations $q \mapsto hqh^{-1}$). Thus, the orbit $O(q_0)$ coincides with the conjugacy class $[q_0]$. In $\OO$ (non-associative), $\Aut(\OO)=G_2$ is strictly larger than the group generated by conjugations. Theorem~\ref{thm:generalized_inflation} correctly identifies the root manifold using the full automorphism group $G_2$.
\end{remark}

\section{Dynamical Geometry: Central Modulation}

We introduce a dynamical framework by considering polynomials $P(x, t) = \sum a_k(t) x^k$. This analysis relies only on power-associativity.

\subsection{Stability, Bifurcation, and Breathing Modes}

We analyze the trinomial system $P(x,t) = x^{2k} + a(t)x^k + b(t) = 0$. The auxiliary roots depend on the discriminant $\Delta(t) = a(t)^2 - 4b(t)$.

The stability region for distinct spheres of purely imaginary roots requires $\Delta(t) > 0$, $b(t)>0$, and $a(t)<0$.

\begin{definition}[Breathing Mode]
If the coefficients $a_k(t)$ remain central and satisfy the stability conditions, the root manifolds undergo continuous metric deformation while preserving their topological class.
\end{definition}

\begin{proposition}[Classification of Central Topological Bifurcations]\label{prop:central_bifurcations}
Under central modulation ($a(t), b(t) \in \R$), the topology of the root set changes when the trajectory crosses the stability boundaries (Saddle-Node, Pitchfork, Hopf bifurcations).
\end{proposition}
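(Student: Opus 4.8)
The plan is to analyze the trinomial $P(x,t) = x^{2k} + a(t)x^k + b(t)$ via the substitution $y = x^k$, which reduces the auxiliary equation to the quadratic $y^2 + a(t)y + b(t) = 0$ whose roots $y_\pm(t) = \tfrac{1}{2}\bigl(-a(t) \pm \sqrt{\Delta(t)}\bigr)$ govern the structure of $Z(P)$. By power-associativity, a purely imaginary $x$ with $x^k = y_\pm$ exists precisely when $y_\pm$ lands in the appropriate locus (negative real axis for even $k$, giving a sphere $S^{d_{\rm A}-2}$ of solutions by the Generalized Inflation Theorem; more generally a union of such spheres indexed by the $k$-th roots). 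First I would enumerate the stable regime: when $\Delta(t)>0$, $b(t)>0$, $a(t)<0$, both $y_\pm$ are distinct negative reals, producing two disjoint root spheres — the breathing-mode configuration. The topology of $Z(P_t)$ is locally constant on each connected component of the complement of the boundary locus $\{\Delta=0\}\cup\{b=0\}\cup\{a=0\}$ in $(a,b)$-parameter space, so topological change can only occur when the trajectory $t\mapsto(a(t),b(t))$ crosses one of these walls.

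Next I would classify the three wall-crossings by local normal form. Crossing $\Delta(t)=0$ transversally with $y_+ = y_- < 0$: two root spheres merge into one (with multiplicity), the classic saddle-node/fold; the number of hyperbolic equilibria of the induced radial reduced dynamics $\dot r = f(r,\mu)$ drops by two as $\mu$ passes through $0$ with $f \sim \mu - r^2$. Crossing $b(t)=0$: one auxiliary root passes through $y=0$, i.e. a root sphere collapses to the single point $x=0$ (or escapes to the real locus), which on the reduced system is an exchange-of-stability / transcritical-type event; if the modulation respects an additional reflection symmetry $x\mapsto -x$ (e.g. purely even spectrum) this becomes a pitchfork, where one equilibrium splits into two symmetric ones. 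Crossing $a(t)=0$ with $b(t)>0$: the auxiliary roots become a complex-conjugate pair $y_\pm = \pm i\sqrt{b}$, so the purely-imaginary root spheres cease to exist and the root set jumps to a lower-dimensional (or differently embedded) stratum — on a genuinely $t$-dependent system this is where a pair of equilibria of the reduced flow acquire purely imaginary linearization, i.e. a Hopf scenario, provided the first Lyapunov coefficient is nonzero and the eigenvalues cross transversally ($\tfrac{d}{dt}\Real\lambda \neq 0$). In each case I would cite the standard normal-form theorems (Sotomayor's theorem for saddle-node/transcritical/pitchfork, the Hopf bifurcation theorem — cf. \cite{guckenheimer1983nonlinear}) applied to the one-parameter reduced family obtained by the $y=x^k$ substitution and the radial/angular decomposition on each inflated sphere.

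The main obstacle is making precise the sense in which ``the topology changes'': the root \emph{variety} $Z(P_t)$ is a union of spheres $S^{d_{\rm A}-2}$, and what I must track is (i) the number of components, (ii) the dimension, and (iii) whether a sphere degenerates to a point or empties out — and then certify that at least one of these invariants jumps at the crossing. The cleanest route is to define a continuous family via the resultant/discriminant $\Delta(t)$ as the bifurcation function and show that its simple zeros are exactly the fold points, while the linear factors $a(t)$, $b(t)$ supply the transcritical/pitchfork and Hopf loci, so that genericity (simple zeros, nonvanishing transversality and Lyapunov coefficients) is an open dense condition on the modulation data. A secondary subtlety is that for $k>1$ each auxiliary root $y_\pm$ pulls back to $k$ distinct root spheres (the $k$-th roots of $y_\pm$ on the imaginary locus), so component counts must be multiplied by $k$ and one must check no spurious coincidences occur off the walls — this is handled by noting the $k$-th root map is a covering away from $y=0$, so the wall $b=0$ is exactly where sheets collide, consistent with the transcritical/pitchfork analysis. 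With these reductions the proposition follows by invoking the cited normal-form results termwise.
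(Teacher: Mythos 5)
The paper does not actually prove Proposition~\ref{prop:central_bifurcations}; it is stated bare and immediately followed by Figure~\ref{fig:breathing_modes}, so there is no ``paper proof'' to compare against. Your attempt therefore has to stand on its own, and its core idea is sound: reduce via $y=x^k$ to the auxiliary quadratic, observe that the structure of $Z(P_t)$ is locally constant on components of the complement of $\{\Delta=0\}\cup\{b=0\}\cup\{a=0\}$, and track number of components, dimension, and degeneracy across the walls. The remark that the $k$-th root map is a covering away from $y=0$, so sheets can only collide on $\{b=0\}$, is a genuinely useful observation the paper does not make.

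There are, however, real gaps. First, the bifurcation-theoretic labels (Sotomayor, Hopf) apply to one-parameter families of \emph{vector fields} with fixed points; the proposition is about the static variety $Z(P_t)$. You invoke an ``induced radial reduced dynamics $\dot r=f(r,\mu)$'' and ``the reduced flow,'' but these are never constructed from the data of the problem — the gradient flow that would supply them is only introduced in Definition~\ref{def:gradient_flow}, several sections later, and you never link the two. Without an explicit ODE whose equilibria are the radii $r$, citing Sotomayor's theorem and the Hopf theorem is a category mismatch. Second, the Hopf identification at $a=0$ is the weakest link: the auxiliary roots $y_\pm$ becoming a complex-conjugate pair is not the same as a Jacobian eigenvalue pair crossing the imaginary axis, and in any case the locus $\{a=0\}$ only touches the closure of the stability region $\{a<0,\,b>0,\,\Delta>0\}$ at the corner $(a,b)=(0,0)$, so it is not a codimension-one wall of that region the way $\{\Delta=0\}$ and $\{b=0\}$ are. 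Third, you should be explicit about \emph{which} root set changes topology. When $\Delta$ passes through zero with $b>0$, the auxiliary roots go from two distinct negatives to a complex-conjugate pair; the \emph{purely imaginary} sphere stratum does vanish, but the full root variety $Z(P_t)\subset\A$ still contains two disjoint conjugacy-class spheres on the $\Delta<0$ side (now centered off the imaginary hyperplane), so as a topological space $Z(P_t)$ is $S^{d_{\rm A}-2}\sqcup S^{d_{\rm A}-2}$ on both sides of the wall and degenerates only at $\Delta=0$ itself. The proposition's claim of a topological change survives only under a reading that tracks the imaginary-centered stratum, or the instantaneous coincidence at the wall; your proof should say which. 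Fixing these three points — constructing the reduced flow explicitly (most naturally from $\mathcal{V}_\epsilon$ restricted to the radial direction), dropping or re-deriving the Hopf label, and pinning down the precise topological invariant that jumps — would turn a plausible sketch into a proof.
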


Figure~\ref{fig:breathing_modes} illustrates the dynamic behavior within the stability region for $k=2$ in $\HH$.

\begin{figure}[htbp]
    \centering
    \includegraphics[width=\textwidth]{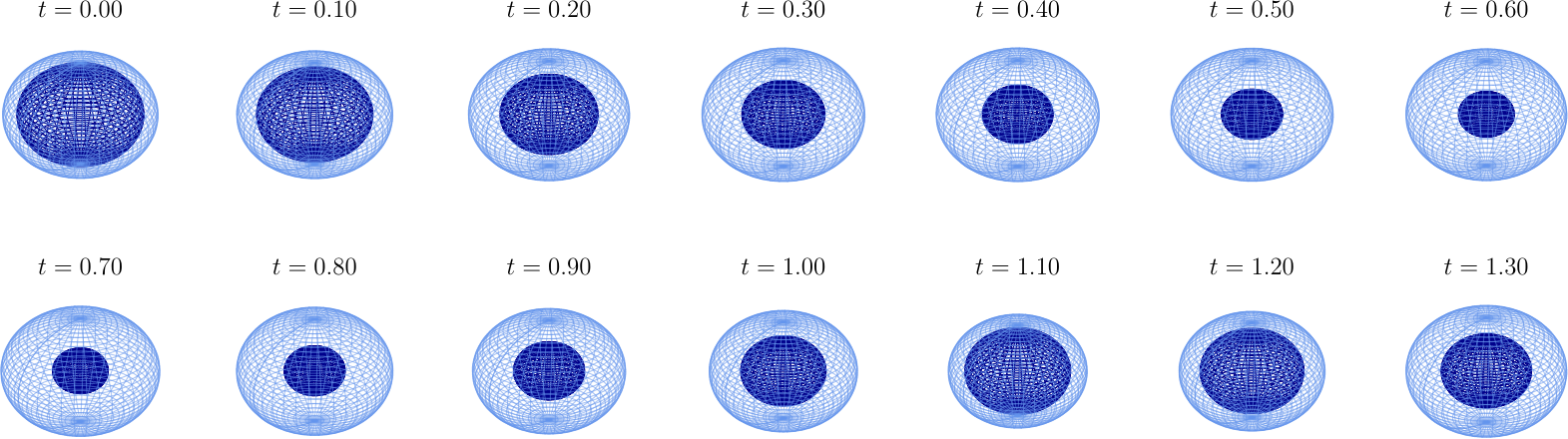}
    \caption{Dynamics of Quaternionic Root Manifolds (Breathing Modes). Visualization of the solution set for $P(q,t) = q^4 + a(t)q^2 + b(t)$ in $\HH$. The system exhibits coupled radial oscillations and changes in the radial separation ratio. An animation of these modes can be seen in \url{http://youtu.be/WJTjyMth1FI}}
    \label{fig:breathing_modes}
\end{figure}

\subsection{Non-Linear Dynamics and Spectral Analysis}

In the stable regime, the radii $R_{\rm inner}, R_{\rm outer}$ are coupled non-linearly:
\begin{equation}\label{eq:radii_explicit}
R_{\rm inner, outer}(t) = \sqrt{\frac{|a(t)| \mp \sqrt{\Delta(t)}}{2}}.
\end{equation}
\noindent Sinusoidal driving generates harmonics and intermodulation frequencies (Figure~\ref{fig:spectral_analysis}).

\begin{figure}[htbp]
    \centering
    \includegraphics[width=0.95\textwidth]{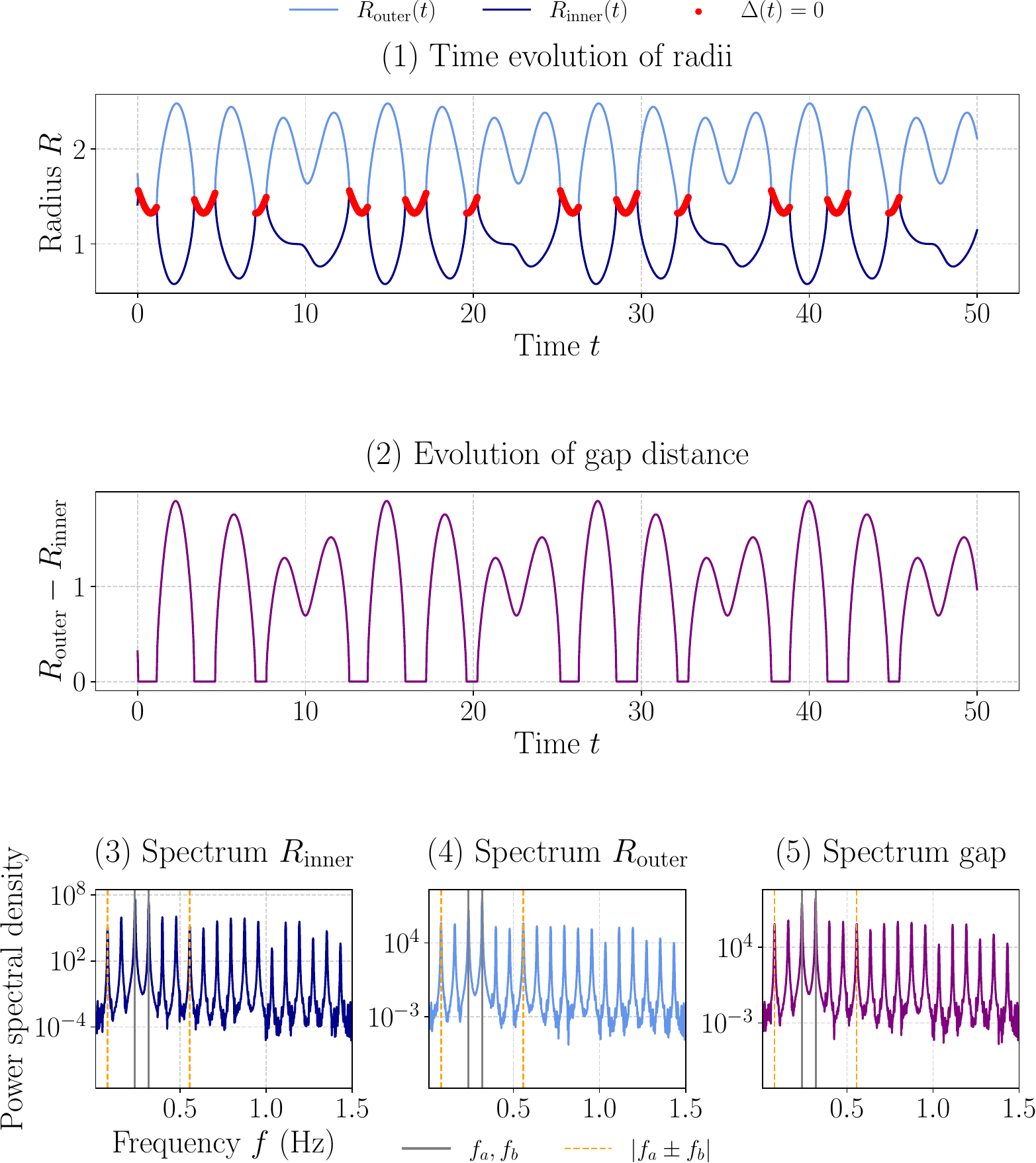}
    \caption{Spectral Analysis of Breathing Modes (in $\HH$). (1) Time evolution of radii. (2) Evolution of the Discriminant $\Delta(t)$. Markers indicate proximity to the bifurcation boundary, leading to sharp transitions (cusps). (3) Evolution of the gap distance. (4-6) Power spectra (PSD), confirming strong non-linear coupling.}
    \label{fig:spectral_analysis}
\end{figure}

\subsection{Dynamics Near Topological Bifurcation}\label{sec:dynamics_near_bifurcation}

The dynamics near $\Delta(t) = 0$ exhibit distinct behaviors depending on the velocity of the discriminant at the crossing point $t_{\rm c}$.
\begin{equation}\label{eq:velocity_radii}
\frac{d}{dt}(R^2_{1,2}(t)) \propto \mp \frac{\dot{\Delta}(t)}{2\sqrt{\Delta(t)}}.
\end{equation}

\begin{enumerate}
    \item \textbf{Transversal Crossing (``Pushing''):} $\dot{\Delta}(t_{\rm c}) \neq 0$. The velocities become singular (infinite) at $t_{\rm c}$.
    \item \textbf{Tangential Crossing (``Hovering''):} $\dot{\Delta}(t_{\rm c}) = 0$. The singularity is removable, and velocities remain finite.
\end{enumerate}

\section{Topological Collapse: Symmetry Reduction}

We analyze the dynamics when the coefficients $a_k$ leave the center of the algebra $\A$.

\begin{proposition}[Topological Collapse]\label{prop:collapse}
For a generically non-central perturbation $P_\epsilon(x)$, the root manifold undergoes a topological phase transition. The Hausdorff dimension of the solution set $Z(P_\epsilon)$ changes discontinuously at $\epsilon=0$. (Part 1 of Main Theorem~\ref{thm:main_bifurcation}).
\end{proposition}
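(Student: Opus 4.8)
The plan is to compute the Hausdorff dimension of $Z(P_\epsilon)$ separately on the two sides of the wall $\epsilon=0$ and exhibit the jump. Since $Z(P_\epsilon)$ is a real semialgebraic set --- the common zero locus of the $d_{\rm A}$ real equations obtained by expanding $P_\epsilon(x)=0$ in the $d_{\rm A}$ real coordinates of $x$ --- its Hausdorff dimension coincides with its topological (Krull) dimension, so it suffices to control the latter. First I would treat $\epsilon=0$: by hypothesis $V_0=Z(P_0)$ contains a non-real submanifold of positive dimension, and by the Generalized Inflation Theorem~\ref{thm:generalized_inflation} together with the Dimensional Inflation Law~\ref{cor:inflation_law} each non-central root $\alpha$ lies on an orbit $\Aut(\A)/\Stab(\alpha)\cong S^{d_{\rm A}-2}$ contained in $Z(P_0)$; confinement to finitely many such orbits (plus finitely many real roots) forces $\dimH Z(P_0)=d_{\rm A}-2\ge 2$ for $\A\in\{\HH,\OO\}$.

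Next I would treat $\epsilon\neq 0$. By Definition~\ref{def:analytic_deformation}, there is a punctured neighborhood of $0$ on which the automorphism stabilizer $G_{P_\epsilon}$ is trivial. On this set I would invoke the Generalized Symmetry Reduction Theorem~\ref{thm:generalized_symmetry_reduction} to conclude that the non-real root set of $P_\epsilon$ is zero-dimensional; the real roots, being zeros of the over-determined system obtained by restricting $P_\epsilon$ to $\R\subset\A$, are likewise isolated. Hence $\dimH Z(P_\epsilon)=0$ throughout the punctured neighborhood. Combining the two computations, the function $\epsilon\mapsto\dimH Z(P_\epsilon)$ equals $d_{\rm A}-2\ge 2$ at $\epsilon=0$ but vanishes identically off $\epsilon=0$, so it is discontinuous at the origin, which is precisely the assertion of the proposition.

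The main obstacle is the implication ``$G_{P_\epsilon}$ trivial $\Rightarrow \dimH Z(P_\epsilon)=0$.'' The difficulty is that once the coefficients leave the center, $\Aut(\A)$ no longer acts on $Z(P_\epsilon)$, so one cannot simply assert that positive-dimensional components are automorphism orbits and then eliminate them by triviality of the stabilizer. The route I would follow --- and the one the Symmetry Reduction Theorem is built on --- is the Gordon--Motzkin localization: the roots of a degree-$n$ polynomial over $\HH$ (resp. $\OO$) are confined to finitely many admissible spheres (conjugacy classes), and on each such sphere $C_\beta$ the power-associative reduction $x^2\equiv 2\Real(\beta)\,x-|\beta|^2$ collapses $P_\epsilon|_{C_\beta}$ to an affine expression $A_\beta x+B_\beta$; the whole sphere lies in $Z(P_\epsilon)$ if and only if $A_\beta=B_\beta=0$, and one must show this system of coefficient identities is equivalent to $G_{P_\epsilon}$ being non-trivial, so that triviality of $G_{P_\epsilon}$ forbids any spherical component and leaves only isolated roots.

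The remaining technical point, which I would not belabor, is uniformity: the analyticity of $\epsilon\mapsto a_k(\epsilon)$ together with the algebraicity of the ``spherical-root locus'' ensures that zero-dimensionality holds on a full punctured neighborhood of $0$, not merely off a sparse $\epsilon$-set --- a positive-dimensional component cannot persist along a sequence $\epsilon_n\to 0$ without already contradicting the triviality hypothesis, which is exactly what the statement of the proposition requires.
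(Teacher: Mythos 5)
Your plan mirrors the paper's own argument (Dimensional Inflation at $\epsilon=0$, Symmetry Reduction at $\epsilon\neq0$), and you have correctly put your finger on the soft spot: once the coefficients leave the center, $\Aut(\A)$ no longer acts on $Z(P_\epsilon)$, so Theorem~\ref{thm:generalized_symmetry_reduction} --- which only asserts invariance under $G_{P_\epsilon}$ --- becomes vacuous when $G_{P_\epsilon}$ is trivial and does not by itself force the roots to be isolated. You then propose to close the gap via Gordon--Motzkin reduction on each admissible sphere $C_\beta$, reducing $P_\epsilon|_{C_\beta}$ to $A_\beta x + B_\beta$, and you observe that what must be proved is the implication ``$A_\beta=B_\beta=0$ for some $\beta$ $\Rightarrow$ $G_{P_\epsilon}$ is non-trivial,'' so that triviality of the stabilizer excludes spherical components.

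That implication is false, so this route cannot be completed. Concretely, in $\HH$ take $P_0(x)=x^3+x=x(x^2+1)$ (central, with root variety containing $S^2$) and $P_\epsilon(x)=\bigl[(1+\epsilon i)x+\epsilon j\bigr](x^2+1)=(1+\epsilon i)x^3+\epsilon j\,x^2+(1+\epsilon i)x+\epsilon j$. This is an analytic deformation, and for $\epsilon\neq 0$ the coefficients generate all of $\HH$, so by Skolem--Noether the automorphism stabilizer $G_{P_\epsilon}$ is trivial. Nevertheless $M(x)=x^2+1$ divides $P_\epsilon(x)$ for every $\epsilon$, i.e. $A_\beta=B_\beta=0$ for $\beta=i$, and the whole sphere $\{q\in\HH: q^2=-1\}\cong S^2$ lies in $Z(P_\epsilon)$; the Hausdorff dimension is $2$ on both sides of the wall, with no discontinuity. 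The presence of a central quadratic factor (equivalently, a spherical root) is a condition on the factorization of $P_\epsilon$, not on $G_{P_\epsilon}$; a polynomial with a trivial automorphism stabilizer can still admit such a factor. What is actually needed --- and what neither your proposal nor the paper's invocation of Theorem~\ref{thm:generalized_symmetry_reduction} supplies --- is a hypothesis directly excluding central quadratic divisors of $P_\epsilon$ for $\epsilon\neq0$ (a genuine genericity condition on the deformation family), after which the Gordon--Motzkin reduction you outline does yield $A_\beta\neq0$ on every admissible sphere and hence isolated roots. Without that strengthening, the step ``$G_{P_\epsilon}$ trivial $\Rightarrow$ $\dimH Z(P_\epsilon)=0$'' is a gap, not a corollary.
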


The collapse is governed by the reduction of the symmetry group acting on the roots. To accommodate non-associative algebras ($\OO$), we must use the automorphism group.

\begin{definition}[Automorphism Stabilizer of the Polynomial]\label{def:aut_stabilizer}
Let $P(x) \in \A[x]$. The automorphism stabilizer of $P$ is the subgroup of $\Aut(\A)$ that fixes all coefficients:
\begin{equation}
G_P = \{g \in \Aut(\A) \mid g(a_k) = a_k \text{ for all } k\}.
\end{equation}
\end{definition}

\begin{theorem}[Generalized Symmetry Reduction Theorem]\label{thm:generalized_symmetry_reduction}
The root set $Z(P)$ is invariant under the action of $G_P$. The geometry of the root manifolds is determined by the orbits of $G_P$.
\end{theorem}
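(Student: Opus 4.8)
The plan is to reproduce the argument used for the Generalized Inflation Theorem (Theorem~\ref{thm:generalized_inflation}), but with the full automorphism group $\Aut(\A)$ replaced by the polynomial stabilizer $G_P$. The crux is the equivariance identity $P(g(x)) = g(P(x))$ for every $g \in G_P$ and every $x \in \A$; once this is in hand, invariance of $Z(P)$ is immediate, since $\alpha \in Z(P)$ gives $P(g(\alpha)) = g(P(\alpha)) = g(0) = 0$.

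First I would establish the equivariance identity. Because $\A$ is power-associative, each monomial $\alpha^k$ is unambiguously defined, and an induction on $k$ using $g(xy) = g(x)g(y)$ shows $g(\alpha^k) = g(\alpha)^k$ (the right-hand product being again well-defined by power-associativity). Since $g \in G_P$ fixes every coefficient, $g(a_k \alpha^k) = g(a_k)\,g(\alpha^k) = a_k\,g(\alpha)^k$, and summing over $k$ with the $\R$-linearity of $g$ yields $g(P(\alpha)) = \sum_k a_k g(\alpha)^k = P(g(\alpha))$. This proves the first assertion of the theorem.

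Next I would address the geometric claim. The group $\Aut(\A)$ is a compact Lie group ($SO(3)$ for $\HH$, $G_2$ for $\OO$); the stabilizer $G_P$ is cut out inside it by the linear conditions $g(a_k) = a_k$, hence is a closed—indeed algebraic—subgroup, and therefore itself a compact Lie group by Cartan's closed-subgroup theorem. For any root $\alpha$, the point stabilizer $\Stab_{G_P}(\alpha)$ is closed, so by the Orbit--Stabilizer theorem the orbit $O_{G_P}(\alpha) \subseteq Z(P)$ is an embedded submanifold diffeomorphic to the homogeneous space $G_P/\Stab_{G_P}(\alpha)$. Thus $Z(P)$ is a $G_P$-invariant set stratified by such orbits, and the local differential-geometric structure of its continuous components is exactly that of these homogeneous spaces; in particular, when $G_P$ is trivial every orbit is a single point, which is the symmetry-reduction mechanism underlying Proposition~\ref{prop:collapse}.

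The main caveat—rather than a genuine obstacle—is that the present statement only guarantees that $Z(P)$ \emph{contains} the $G_P$-orbits and that those orbits are the homogeneous pieces; it does not by itself show that $Z(P)$ coincides with the union of positive-dimensional orbits, nor that triviality of $G_P$ forces $Z(P)$ to be $0$-dimensional. Upgrading ``loss of continuous symmetry'' to ``the root set is actually isolated'' requires the complementary analytic inputs—the Jacobian rank computation of Theorem~\ref{thm:jacobian_singularity} and the Gordon--Motzkin localization of Theorem~\ref{thm:algebraic_alignment}—which I would cite rather than reprove here. Within the scope of this statement, the orbit argument above is complete.
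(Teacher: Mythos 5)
Your proof is correct and follows essentially the same approach as the paper's: establish the equivariance identity $g(P(\alpha)) = P(g(\alpha))$ for $g \in G_P$ using $\R$-linearity, multiplicativity, power-associativity, and the fact that $G_P$ fixes the coefficients, then conclude $G_P$-invariance of $Z(P)$ and apply the orbit--stabilizer theorem. You are considerably more careful than the paper's one-line proof (which only addresses the invariance claim), and your closing caveat—that triviality of $G_P$ alone does not force isolated roots without the Jacobian and Gordon--Motzkin inputs—is a fair and accurate reading of what the statement does and does not assert.
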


\begin{proof}
If $g \in G_P$, then $P(g(x)) = \sum g(a_k) g(x)^k = \sum a_k g(x)^k$. If $P(\alpha)=0$, then $P(g(\alpha))=0$.
\end{proof}

\subsection{The Localization Theorem (Alignment Principle)}

We now address the principle that the resulting roots align with the algebraic structure of the coefficients. This result, established by Gordon and Motzkin \cite{gordon1965zeros}, is robust across all real normed division algebras, as it relies on alternativity rather than associativity.

\begin{definition}[Coefficient Subalgebra]
Let $P(x) \in \A[x]$. The coefficient subalgebra $\A(P)$ is the smallest $\R$-subalgebra of $\A$ containing all coefficients $a_k$.
\end{definition}

\begin{theorem}[Localization Theorem (Gordon-Motzkin)]\label{thm:algebraic_alignment}
Let $\A$ be a real normed division algebra ($\C, \HH, \OO$). Let $P(x) \in \A[x]$. Any isolated root $x_0$ of $P(x)$ belongs to the coefficient subalgebra $\A(P)$.
\end{theorem}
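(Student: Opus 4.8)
The plan is to exploit that every element of a real normed division algebra satisfies a monic real quadratic relation, use it to collapse the evaluation $P(x_0)$ into an $\A(P)$-linear expression in $x_0$, and then read off the conclusion from a dichotomy — in effect a recasting of the Gordon--Motzkin argument that avoids associativity. Concretely, set $s = x_0 + \bar x_0 \in \R$ and $n = \|x_0\|^2 \in \R$, so that $x_0^2 = s\,x_0 - n$. Because powers of a single element associate in $\HH$ and $\OO$, repeated substitution $x_0^2 \mapsto s\,x_0 - n$ produces real scalars $\lambda_k,\mu_k$ obeying the Chebyshev-type recursion $\lambda_k = s\lambda_{k-1} - n\lambda_{k-2}$, $\mu_k = s\mu_{k-1} - n\mu_{k-2}$ (with $\lambda_0 = \mu_1 = 0$, $\lambda_1 = \mu_0 = 1$) such that $x_0^{\,k} = \lambda_k x_0 + \mu_k$ for every $k$. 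Substituting into $P(x_0) = \sum_k a_k x_0^{\,k}$ and using only distributivity and $\R$-bilinearity then gives
\[
P(x_0) = B\,x_0 + C, \qquad B := \sum_{k} \lambda_k\, a_k, \qquad C := \sum_{k} \mu_k\, a_k,
\]
with $B, C \in \A(P)$, and — crucially — $B$ and $C$ depending only on $s$, $n$, and the coefficient list $\{a_k\}$, not otherwise on $x_0$.

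Next I would run the dichotomy on $B$, assuming $P(x_0)=0$, i.e. $B x_0 = -C$. If $B \neq 0$, then $x_0 = -B^{-1}C$ (immediate in $\HH$; in $\OO$ valid since $\R\langle B, x_0\rangle$ is associative by Artin's theorem and contains $B^{-1}\in\R[B]$). Moreover $\A(P)$, being a unital subalgebra of a normed division algebra, is itself isomorphic to $\R$, $\C$, $\HH$, or $\OO$ by the classification of such subalgebras, hence is a division algebra; therefore $B^{-1}\in\A(P)$ and $x_0 = -B^{-1}C \in \A(P)$. If instead $B=0$, then $C=0$ as well, and the identical reduction shows $P(y)=0$ for \emph{every} $y\in\A$ with $y^2 = s\,y - n$. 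When $x_0 \notin \R$ one has $s^2 < 4n$, and this solution set equals $\{\tfrac{s}{2} + t\,u : u\in\A,\ u^2 = -1\}$ with $t = \sqrt{n - s^2/4}>0$, which by Corollary~\ref{cor:inflation_law} is diffeomorphic to $S^{\,d_{\rm A}-2}$ and contains $x_0$; for $\A\in\{\HH,\OO\}$ this is a positive-dimensional submanifold of roots through $x_0$, contradicting the hypothesis that $x_0$ is isolated. Hence $x_0\in\R\subseteq\A(P)$. Either branch gives $x_0\in\A(P)$.

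On scope and difficulty: the reduction to $B x_0 + C$ uses nothing past power-associativity, so it holds verbatim in $\OO$; associativity enters only through Artin's theorem and the subalgebra classification in the inversion step, and no heavier input (e.g. Skolem--Noether, which alone would settle only $\HH$) is required. For $\A=\C$ the branch $B=0$ produces only the discrete pair $\{x_0,\bar x_0\}$, so no obstruction arises there, and the substantive, genuinely non-commutative content is the case $\A\in\{\HH,\OO\}$, around which I would organize the write-up. I expect the principal obstacle to be conceptual rather than computational: isolating precisely where the hypothesis ``$x_0$ isolated'' is used — it enters \emph{only} in the branch $B=0$, via the fiber $S^{d_{\rm A}-2}$ being positive-dimensional, i.e. via $\dim\A\ge 4$ — while the remaining bookkeeping in $\OO$ (checking that every manipulation occurs inside the associative subalgebra $\R[x_0]$, resp. $\R\langle B,x_0\rangle$, and that $B^{-1}C$ stays in $\A(P)$) is routine once Artin's theorem is available.
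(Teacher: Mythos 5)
Your argument is essentially identical to the paper's: your Chebyshev-type recursion $x_0^k = \lambda_k x_0 + \mu_k$ is precisely the explicit form of the remainder obtained by right-dividing $P(x)$ by the central minimal polynomial $M(x) = x^2 - sx + n$, and you then run the same dichotomy — nonzero linear coefficient gives invertibility in the sub-division-algebra $\A(P)$, zero linear coefficient forces a whole sphere $S^{d_{\rm A}-2}$ of roots, contradicting isolation. The only difference is presentational (hand-computed reduction versus invoking the division algorithm, plus the explicit nod to Artin's theorem for the octonionic inversion step), not conceptual.
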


\begin{proof}
Let $x_0$ be an isolated root. Let $M(x) = x^2 - 2\Real(x_0)x + \|x_0\|^2$ be the minimal polynomial of $x_0$ over $\R$.

We apply the right division algorithm in $\A[x]$ to divide $P(x)$ by $M(x)$. Since $M(x)$ is central and $\A$ is alternative (which holds for $\C, \HH, \OO$ \cite{schafer1995introduction}), the division is valid:
\begin{equation}
P(x) = Q(x) M(x) + R(x).
\end{equation}
The remainder is $R(x) = A x + B$, with $A, B \in \A(P)$.

Evaluating at $x=x_0$, we have $A x_0 + B = 0$.

If $A=0$, then $B=0$. This implies $M(x)$ divides $P(x)$. The entire orbit $O(x_0)$ (e.g., $S^2$ or $S^6$) are roots (as $M(x)$ is central). This contradicts the assumption that $x_0$ is an isolated root (unless $x_0$ is real).

Therefore, $A \neq 0$. $\A(P)$ is a finite-dimensional subalgebra of $\A$. Since $\A$ is an alternative division algebra, $\A(P)$ is also an alternative division algebra. Thus, $A$ is invertible in $\A(P)$.

We solve for $x_0$: $x_0 = -A^{-1} B$. Since $A^{-1}, B \in \A(P)$, we conclude $x_0 \in \A(P)$.
\end{proof}

\begin{remark}[Genericity Conditions]\label{rem:genericity_alignment}
The theorem requires that the remainder coefficient $A \neq 0$. This condition connects the algebraic property of the coefficients to the topological property of the root being isolated. It fails only in nongeneric cases where $M(x)$ divides $P(x)$, which occurs when the root $x_0$ remains part of a continuous manifold (i.e., the symmetry reduction is incomplete). In the generic non-central case defining isolated roots, $A \neq 0$.
\end{remark}

\begin{corollary}[Alignment Principle]\label{cor:alignment_principle}
If the coefficients of $P(x)$ all belong to a proper subalgebra (e.g., a copy of $\C$ or $\HH$ within $\OO$), then all isolated roots must also belong to that subalgebra.
\end{corollary}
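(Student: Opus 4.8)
The plan is to obtain this as an immediate specialization of the Localization Theorem (Theorem~\ref{thm:algebraic_alignment}), so the proof will be very short. Let $B \subseteq \A$ denote the given proper unital $\R$-subalgebra containing all coefficients $a_k$ of $P$. The first step is the observation that $\A(P)$ is, by construction, the \emph{smallest} $\R$-subalgebra of $\A$ containing every $a_k$; since $B$ is one such subalgebra, this forces the containment $\A(P) \subseteq B$.

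The second step is to invoke Theorem~\ref{thm:algebraic_alignment}: as $\A \in \{\C, \HH, \OO\}$ is a real normed division algebra, every isolated root $x_0$ of $P$ satisfies $x_0 \in \A(P)$. Chaining the two containments gives $x_0 \in \A(P) \subseteq B$, which is exactly the assertion; the stated examples $B \cong \C$ and $B \cong \HH$ inside $\OO$ are then just instances.

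There is essentially no obstacle, and the only point needing a word of care is the genericity hypothesis recorded in Remark~\ref{rem:genericity_alignment}. The conclusion concerns \emph{isolated} roots only: if a root still lies on a positive-dimensional orbit manifold (incomplete symmetry reduction), then its real minimal polynomial $M(x)$ divides $P(x)$, the remainder argument gives no constraint, and such orbit spheres generically exit $B$ entirely. So the statement must be — and is — phrased for isolated roots, and under that restriction Theorem~\ref{thm:algebraic_alignment} applies verbatim.

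As a remark, I would optionally note a self-contained alternative: rerun the right division of $P(x)$ by $M(x)$ exactly as in the proof of Theorem~\ref{thm:algebraic_alignment}, but tracking that all arithmetic stays inside $B$ — legitimate because $M(x)$ is central over $\R \subseteq B$, and $B$, being a subalgebra of an alternative division algebra, is itself an alternative division algebra, so the remainder $R(x) = Ax + B'$ has $A, B' \in B$ and $x_0 = -A^{-1}B' \in B$ once $A \neq 0$. Since this merely re-derives the theorem in the special case, invoking Theorem~\ref{thm:algebraic_alignment} directly is cleaner, and that is the route I would adopt.
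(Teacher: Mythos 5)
Your proof is correct and takes exactly the route the paper intends: the corollary is stated immediately after Theorem~\ref{thm:algebraic_alignment} with no written proof, precisely because it follows from the minimality of $\A(P)$ (giving $\A(P)\subseteq B$) together with the conclusion $x_0\in\A(P)$ of the Localization Theorem. Your additional remark about the genericity caveat and the self-contained re-derivation is accurate but not needed.
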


\section{The Dynamics of Collapse: Deformation and Flow}

We analyze the topological collapse using algebraic geometry (deformation theory) and differential geometry (Morse-Bott theory and gradient flow). These methods rely on the smoothness of the potential landscape and generalize directly to $\OO$.

\subsection{Deformation Theory and Jacobian Singularity}

We consider a deformation $P_\epsilon(x)$. The total space is $\mathcal{X} = \{ (x, \epsilon) \mid P_\epsilon(x) = 0 \}$. The deformation is non-flat as $\dim(V_\epsilon) < \dim(V_0)$.

\begin{theorem}[Jacobian Singularity of the Central Fiber]\label{thm:jacobian_singularity}
Let $V_0$ be the root manifold of a central polynomial $P_0$. The Jacobian $J_{P_0}(x)$ (viewed as a map $\R^{d_{\rm A}} \to \R^{d_{\rm A}}$) is singular for all $x \in V_0$. (Part 2 of Main Theorem~\ref{thm:main_bifurcation}).
\end{theorem}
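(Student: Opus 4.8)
The plan is to show that the positive-dimensional root manifold guaranteed by the inflation mechanism necessarily lies in the kernel of the Jacobian at each of its points. Identify $\A$ with $\R^{d_{\rm A}}$, so that $P_0$ becomes a real-analytic map $\R^{d_{\rm A}}\to\R^{d_{\rm A}}$, $V_0=Z(P_0)$ is the common zero locus of its $d_{\rm A}$ real component functions, and $J_{P_0}(x)$ is the associated real Jacobian matrix. Fix a non-real root $x_0\in V_0$ (at a real root the orbit degenerates to a point and the claim is vacuous). By the Generalized Inflation Theorem~\ref{thm:generalized_inflation} and the Dimensional Inflation Law~\ref{cor:inflation_law}, the orbit $M_{x_0}=\Aut(\A)\cdot x_0$ is a smooth submanifold of $V_0$ of dimension $d_{\rm M}=d_{\rm A}-2$, which is strictly positive for $\A\in\{\HH,\OO\}$.

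The core step is the inclusion $T_{x_0}M_{x_0}\subseteq\Ker J_{P_0}(x_0)$. Since $M_{x_0}\subseteq Z(P_0)$, for every smooth curve $\gamma\colon(-\delta,\delta)\to M_{x_0}$ with $\gamma(0)=x_0$ one has $P_0(\gamma(s))\equiv 0$; differentiating at $s=0$ yields $J_{P_0}(x_0)\,\gamma'(0)=0$, and letting $\gamma'(0)$ range over $T_{x_0}M_{x_0}$ gives the inclusion. Hence $\dim\Ker J_{P_0}(x_0)\ge d_{\rm M}=d_{\rm A}-2\ge 2$, so $\rank J_{P_0}(x_0)\le d_{\rm A}-d_{\rm M}<d_{\rm A}$ and $J_{P_0}(x_0)$ is singular. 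This already proves Theorem~\ref{thm:jacobian_singularity}, and simultaneously the upper bound in the rank identity $\rank(J_{P_0}(x_0))=d_{\rm A}-d_{\rm M}$ of Part 2 of Main Theorem~\ref{thm:main_bifurcation}.

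To obtain the matching lower bound (hence equality of the rank), I would decompose $\R^{d_{\rm A}}$ orthogonally as $\C_{x_0}\oplus T_{x_0}M_{x_0}$, where $\C_{x_0}=\R\cdot 1\oplus\R\cdot\Img(x_0)$ is the commutative subalgebra generated by $x_0$. This is a genuine direct sum because automorphisms of $\A$ are isometries fixing $1$ and commuting with conjugation, so the orbit tangent vectors lie in $\Img(x_0)^{\perp}\cap\Img(\A)$, which is exactly the orthogonal complement of $\C_{x_0}$ and has the matching dimension $d_{\rm A}-2$. Because the coefficients of $P_0$ are real, $P_0$ preserves $\C_{x_0}$, so $J_{P_0}(x_0)$ maps $\C_{x_0}$ into $\C_{x_0}$; combined with $T_{x_0}M_{x_0}\subseteq\Ker J_{P_0}(x_0)$, the Jacobian is block-diagonal, $J_{P_0}(x_0)=\mathrm{diag}(A,0)$, with $A$ the $2\times 2$ real representation of the complex derivative $P_0'(x_0)$ on $\C_{x_0}\cong\C$. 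When $x_0$ is a simple root of $P_0$ regarded as a polynomial on $\C_{x_0}$, the matrix $A$ is invertible ($\det A=|P_0'(x_0)|^2>0$), whence $\rank J_{P_0}(x_0)=2=d_{\rm A}-d_{\rm M}$.

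The main obstacle is thus reduced to the simplicity of $x_0$ as a complex root, i.e. $P_0'(x_0)\neq 0$ on $\C_{x_0}$. For the canonical trinomial $P_0(x)=x^{2k}+ax^{k}+b$ with $a,b\in\R$ this fails precisely when the auxiliary equation $\lambda^2+a\lambda+b=0$ has a repeated root, that is, on the bifurcation locus $\Delta=0$ — exactly the degenerate case excluded by the hypothesis that $V_0$ contains a positive-dimensional manifold satisfying the stability conditions of Section~\ref{sec:dynamics_near_bifurcation}. Away from that locus the argument above pins the rank to $d_{\rm A}-d_{\rm M}$, and the bare singularity assertion of Theorem~\ref{thm:jacobian_singularity} requires nothing beyond the differentiation step of the second paragraph, since a positive-dimensional kernel is forced the instant $d_{\rm M}>0$.
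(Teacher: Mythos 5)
Your argument for the bare singularity claim is the same as the paper's: you observe that the orbit $M_{x_0}\subseteq V_0$ has dimension $d_{\rm M}>0$ and that differentiating $P_0\circ\gamma\equiv 0$ along curves in the orbit forces $T_{x_0}M_{x_0}\subseteq\Ker J_{P_0}(x_0)$, hence $\rank J_{P_0}(x_0)\le d_{\rm A}-d_{\rm M}<d_{\rm A}$. The paper's version of this step is essentially identical in content but stated much more tersely (``$T_xV_0$ is related to $\Ker(J_{P_0}(x))$''), and you make it fully rigorous.

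Where you genuinely depart from the paper is the matching lower bound: the paper simply asserts the equality $\rank J_{P_0}(x)=d_{\rm A}-d_{\rm M}$ by invoking rank-nullity, but rank-nullity alone only delivers the inequality once you know the tangent space sits inside the kernel — equality would require the reverse inclusion, which the paper never establishes. Your block decomposition $\R^{d_{\rm A}}=\C_{x_0}\oplus T_{x_0}M_{x_0}$ closes this gap cleanly: since automorphisms are isometries fixing $1$ and commuting with conjugation, the two summands are orthogonal with the right dimensions; since $P_0$ has real coefficients it preserves the commutative subalgebra $\C_{x_0}$, so $J_{P_0}(x_0)$ is block-diagonal with a $2\times 2$ block $A$ (the real form of the complex derivative) and a zero block; and $\det A=|P_0'(x_0)|^2\neq 0$ exactly when $x_0$ is a simple complex root, which you correctly identify as the genericity condition ($\Delta\neq 0$). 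This yields $\rank J_{P_0}(x_0)=2=d_{\rm A}-d_{\rm M}$ precisely, which is Part 2 of the Main Theorem. In short: same route for the singularity, but your decomposition argument supplies the lower bound the paper takes for granted, and also makes explicit the nondegeneracy hypothesis under which the rank is exactly $d_{\rm A}-d_{\rm M}$ rather than smaller.
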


\begin{proof}
The tangent space $T_x V_0$ is related to $\Ker(J_{P_0}(x))$. Since $\dim(V_0) = d_{\rm M} > 0$, by the rank-nullity theorem, $\rank(J_{P_0}(x)) = d_{\rm A} - d_{\rm M}$.
\end{proof}

\subsection{Morse-Bott Theory and Gradient Flow Dynamics}

We connect the algebraic singularity to the geometry of the associated potential landscape.

\begin{definition}[Potential Landscape]
Define the real-analytic function $\mathcal{V}_\epsilon: \A \to \R_{\ge 0}$ as $\mathcal{V}_\epsilon(x) = \|P_\epsilon(x)\|^2$.
\end{definition}

\begin{proposition}[Morse-Bott Degeneracy]
The Jacobian singularity corresponds to the Morse-Bott degeneracy of $\mathcal{V}_0$. The Hessian $\Hess(\mathcal{V}_0)$ is zero along directions tangent to $V_0$.
\end{proposition}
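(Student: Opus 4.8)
The plan is to compute $\Hess(\mathcal{V}_0)$ explicitly and restrict it to the critical locus $V_0 = Z(P_0)$. Write $F\colon \R^{d_{\rm A}} \to \R^{d_{\rm A}}$ for the real-analytic map $x \mapsto P_0(x)$ (well-defined by power-associativity), and let $\langle\cdot,\cdot\rangle$ denote the Euclidean inner product with $\|\cdot\|^2 = \langle\cdot,\cdot\rangle$, so that $\mathcal{V}_0 = \langle F,F\rangle$. One differentiation gives $\nabla\mathcal{V}_0(x) = 2\,J_{P_0}(x)^{\!T}F(x)$; in particular the vanishing of $F$ on $V_0$ forces $\nabla\mathcal{V}_0 = 0$ there, so $V_0 \subseteq \Crit(\mathcal{V}_0)$. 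A second differentiation yields
\begin{equation}
\Hess(\mathcal{V}_0)(x) = 2\,J_{P_0}(x)^{\!T}J_{P_0}(x) \;+\; 2\sum_{k=1}^{d_{\rm A}} F_k(x)\,\Hess(F_k)(x),
\end{equation}
and for $x\in V_0$ the second (``curvature'') term drops out, leaving $\Hess(\mathcal{V}_0)(x) = 2\,J_{P_0}(x)^{\!T}J_{P_0}(x)$.

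I would then show the tangent space $T_xV_0$ lies in the null space of this form. By the Generalized Inflation Theorem (Theorem~\ref{thm:generalized_inflation}), $V_0$ is a smooth submanifold (a union of orbits $G/H$), so $T_xV_0$ is well-defined; for $v\in T_xV_0$, picking a smooth curve $\gamma(s)\subset V_0$ with $\gamma(0)=x$, $\gamma'(0)=v$ and differentiating the identity $F(\gamma(s))\equiv 0$ gives $J_{P_0}(x)v = 0$. Hence for every $w\in\R^{d_{\rm A}}$,
\begin{equation}
\Hess(\mathcal{V}_0)(x)[v,w] = 2\,\langle J_{P_0}(x)v,\,J_{P_0}(x)w\rangle = 0,
\end{equation}
which is precisely the second assertion of the proposition.

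Finally I would identify the degeneracy with the Jacobian singularity and confirm the genuine Morse-Bott condition. Using the elementary fact $\Ker(A^{\!T}A) = \Ker(A)$ for real matrices $A$, applied to $A = J_{P_0}(x)$, we obtain $\Ker(\Hess(\mathcal{V}_0)(x)) = \Ker(J_{P_0}(x))$; by Theorem~\ref{thm:jacobian_singularity} the latter has dimension $d_{\rm A} - (d_{\rm A}-d_{\rm M}) = d_{\rm M}$, and as $T_xV_0$ already sits inside it with the same dimension, equality holds: $\Ker(\Hess(\mathcal{V}_0)(x)) = T_xV_0$. Thus the degeneracy of $\Hess(\mathcal{V}_0)$ along $V_0$ is exactly the rank deficiency $d_{\rm M}$ of $J_{P_0}$ — the stated correspondence — and since $2\,J_{P_0}(x)^{\!T}J_{P_0}(x)$ is positive semidefinite with kernel exactly $T_xV_0$, it is positive definite on any normal complement, so $V_0$ is a nondegenerate critical submanifold. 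The only step requiring care is upgrading the obvious inclusion $T_xV_0 \subseteq \Ker(J_{P_0}(x))$ to an equality, which the dimension count from Theorem~\ref{thm:jacobian_singularity} supplies; everything else is a routine second-derivative computation.
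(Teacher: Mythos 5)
The paper states this proposition but supplies no proof; your argument fills that gap, and it is correct. You use the standard Gauss--Newton decomposition $\Hess(\mathcal{V}_0) = 2\,J_{P_0}^{T}J_{P_0} + 2\sum_k F_k\,\Hess(F_k)$, observe that the residual term dies on $V_0$ because $F\equiv 0$ there, and then pass from $T_xV_0 \subseteq \Ker J_{P_0}(x)$ (differentiate $F\circ\gamma\equiv 0$) to the vanishing of the quadratic form $2\langle J_{P_0}v, J_{P_0}w\rangle$ for tangent $v$. That is exactly the computation the proposition is implicitly invoking, and it proves the literal second sentence cleanly.

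Two remarks on what your argument actually delivers and what it leans on. First, by combining $\Ker(A^{T}A)=\Ker(A)$ with the rank statement in Theorem~\ref{thm:jacobian_singularity} you upgrade the inclusion to $\Ker(\Hess\mathcal{V}_0(x)) = T_xV_0$, which is the genuine Morse--Bott nondegeneracy condition (positive definite on a normal complement), not merely degeneracy along the tangent. This is stronger than the bare wording of the proposition and is worth having, since the paper later invokes Morse--Bott structure and Laplace asymptotics (Proposition~\ref{prop:entropy_scaling}) that require exactly this transverse nondegeneracy. Second, be aware that the load-bearing equality $\rank J_{P_0}(x)=d_{\rm A}-d_{\rm M}$ is taken on faith from Theorem~\ref{thm:jacobian_singularity}, whose own proof in the paper only argues the inclusion $T_xV_0\subseteq\Ker J_{P_0}(x)$ (hence $\rank\le d_{\rm A}-d_{\rm M}$) and does not rule out extra kernel directions at non-generic points. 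You flag this dependence explicitly, which is the right thing to do; it is the one place your otherwise self-contained second-derivative computation inherits an assumption rather than proving it.
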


\noindent The perturbation $\mathcal{V}_\epsilon$ lifts this degeneracy. We study the transition dynamics using the gradient flow.

\begin{definition}[Gradient Flow]\label{def:gradient_flow}
The gradient flow is the dynamical system defined by
\begin{equation}\label{eq:gradient_flow}
\dot{x}(t) = -\nabla \mathcal{V}_\epsilon(x(t)).
\end{equation}
\end{definition}

\subsection{Deformation Retract and Basins of Attraction}

The evolution of $V_0$ under the gradient flow describes the dynamics of the collapse.

\begin{theorem}[Deformation Retract]\label{thm:deformation_retract}
The gradient flow defines a deformation retract from the initial manifold $V_0$ onto the final set of attractors $V_\epsilon$. (Part 3 of Main Theorem~\ref{thm:main_bifurcation}).
\end{theorem}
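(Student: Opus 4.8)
The plan is to realise the retraction as the time-reparametrised negative gradient semiflow of $\mathcal{V}_\epsilon$, controlling its asymptotics with the Lojasiewicz gradient inequality as indicated in the proof of Main Theorem~\ref{thm:main_bifurcation}. First I would record the structural properties that make the flow tame: $\mathcal{V}_\epsilon=\|P_\epsilon\|^2$ is real-analytic on $\A\cong\R^{d_{\rm A}}$; it is coercive, since multiplicativity of the norm in a normed division algebra gives $\|P_\epsilon(x)\|\ge\|a_n\|\,\|x\|^{n}-C(1+\|x\|^{n-1})\to\infty$, so every sublevel set $\{\mathcal{V}_\epsilon\le c\}$ is compact; and $\mathcal{V}_\epsilon^{-1}(0)=Z(P_\epsilon)=V_\epsilon$, which for $\epsilon\neq0$ is a finite set of isolated roots by Theorem~\ref{thm:generalized_symmetry_reduction}. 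Since $\tfrac{d}{dt}\mathcal{V}_\epsilon(\phi_t(x))=-\|\nabla\mathcal{V}_\epsilon(\phi_t(x))\|^2\le0$, coercivity forces the semiflow $\phi_t$ to exist for all $t\ge0$ with every trajectory trapped in a fixed compact sublevel set.

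Next I would establish convergence. Real-analyticity yields, near any $x_*$, constants $C>0$ and $\theta\in[\tfrac{1}{2},1)$ with $\|\nabla\mathcal{V}_\epsilon(x)\|\ge C\,|\mathcal{V}_\epsilon(x)-\mathcal{V}_\epsilon(x_*)|^{\theta}$; the classical Lojasiewicz argument then shows each trajectory $t\mapsto\phi_t(x)$ has finite length and converges to a single critical point $r(x)$ of $\mathcal{V}_\epsilon$, with $r$ continuous wherever the limit is an asymptotically stable equilibrium. As $\mathcal{V}_\epsilon\ge0$, its global minima are precisely the points of $V_\epsilon$; each such root is asymptotically stable with open basin $B(x_0)$, and on the open forward-invariant set $B:=\bigcup_{x_0\in V_\epsilon}B(x_0)$ the map $r$ is continuous with $r(B)=V_\epsilon$. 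Reparametrising the flow by the potential decrease --- letting $\mathcal{V}_\epsilon$ drop linearly from $\mathcal{V}_\epsilon(x)$ to $0$ as $s$ runs over $[0,1]$ --- produces a homotopy $H:B\times[0,1]\to B$ that is continuous up to $s=1$ (using that near an isolated root small values of $\mathcal{V}_\epsilon$ force proximity to that root), with $H(\cdot,0)=\mathrm{id}$, $H(\cdot,1)=r$, and $H(x_0,\cdot)\equiv x_0$ for $x_0\in V_\epsilon$: a strong deformation retract of $B$ onto $V_\epsilon$.

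It remains to insert $V_0$. For $\epsilon$ small, $\mathcal{V}_\epsilon|_{V_0}\to0$ uniformly, since $\mathcal{V}_0$ vanishes on $V_0$ and the coefficients depend analytically on $\epsilon$. Since $V_0$ is a Morse--Bott minimum of $\mathcal{V}_0$, a tubular neighbourhood $N\supset V_0$ on which $-\nabla\mathcal{V}_0$ points strictly inward keeps this property for $-\nabla\mathcal{V}_\epsilon$, so $N$ is forward-invariant and $\mathcal{V}_\epsilon|_N$ has only the minima $V_\epsilon$ together with the finitely many further critical points --- of positive index for generic $\epsilon$ --- created when $\Hess(\mathcal{V}_0)$, degenerate along $V_0$, is lifted. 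Thus $N=(N\cap B)\sqcup W$ with $W$, the union of the stable manifolds of those extra critical points, a finite union of submanifolds of positive codimension; restricting $H$ to $(V_0\setminus W)\times[0,1]$ yields the deformation retraction of the open, dense, full-measure subset $V_0\setminus W\subset V_0$ onto $V_\epsilon$ realising the collapse of Proposition~\ref{prop:collapse}.

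I expect the \textbf{main obstacle} to be precisely this caveat. When $d_{\rm M}=\dim V_0>0$ a deformation retract of \emph{all} of $V_0$ onto $V_\epsilon$ is impossible, because $V_0\cong S^{d_{\rm A}-2}$ is not homotopy equivalent to the finite set $V_\epsilon$: Morse theory forces $\mathcal{V}_\epsilon$ to carry additional critical points (at least a mountain-pass saddle between two roots), so $r(V_0)$ strictly contains $V_\epsilon$. The theorem must therefore be understood as a strong deformation retract of the basin $B(V_\epsilon)$, which intersects $V_0$ in an open dense subset --- equivalently, the gradient flow carries $V_0$ onto $V_\epsilon$ off a measure-zero exceptional locus $W$. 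Making this precise, and in particular checking that $W$ is a proper subvariety of $V_0$ rather than something larger, is the delicate step, and is where triviality of $G_{P_\epsilon}$ and the nondegeneracy of the perturbed roots enter.
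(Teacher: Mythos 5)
Your approach coincides with the paper's: both invoke real-analyticity and coercivity of $\mathcal{V}_\epsilon$, the Lojasiewicz gradient inequality to guarantee single-point convergence of every trajectory, and then restrict the flow to the union of the basins of attraction of $V_\epsilon$ to obtain the retract. The substantive addition in your writeup is the final caveat, and it is correct: since $V_0\cong S^{d_{\rm A}-2}$ is not homotopy equivalent to the finite set $V_\epsilon$, Morse theory forces $\mathcal{V}_\epsilon$ to carry additional critical points of positive index, so the full $V_0$ cannot deformation retract onto $V_\epsilon$; the theorem must be read as a retraction of $V_0\setminus W$ onto $V_\epsilon$, where $W$ is the (positive-codimension, measure-zero) union of stable manifolds of the extra saddles. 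The paper's proof tacitly acknowledges this by restricting to the basins, but its theorem statement does not; your explicit construction of the reparametrized homotopy and your argument that $W\cap V_0$ is a proper subvariety make precise what the paper leaves implicit, and identify where the genericity hypothesis (triviality of $G_{P_\epsilon}$, Morse property of $f_\epsilon$ from Proposition~\ref{prop:morse_lifting}) actually enters.
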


\begin{proof}
Since $\mathcal{V}_\epsilon$ is real-analytic and coercive (in any normed division algebra), the Lojasiewicz gradient inequality \cite{lojasiewicz1963propriete} guarantees that every trajectory converges to a critical point as $t\to \infty$. The global minima $V_\epsilon$ are the stable attractors. The flow map $\Phi_t^\epsilon: \A \to \A$ defines a continuous evolution. Restricting the flow to the union of the basins of attraction of $V_\epsilon$ provides the deformation retract.
\end{proof}

\begin{definition}[Stable Manifold (Basin of Attraction)]
Let $x^*_i \in V_\epsilon$. The stable manifold $W^s(x^*_i)$ is the set of points in $\A$ that flow to $x^*_i$.
\end{definition}

\begin{theorem}[Basin Decomposition]
$V_0 = \bigcup_{i} (V_0 \cap W^s(x^*_i))$.
\end{theorem}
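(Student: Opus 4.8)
The plan is to read the decomposition off directly from the convergence statement already proved in Theorem~\ref{thm:deformation_retract}, and then to check that the basins of the non-minimal critical points contribute only a negligible set, so that the union over the minima $x^*_i \in V_\epsilon$ exhausts $V_0$. The inclusion $\supseteq$ is of course immediate since each $V_0\cap W^s(x^*_i)\subseteq V_0$; the content is the reverse inclusion.

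First I would recall from Theorem~\ref{thm:deformation_retract} that $\mathcal{V}_\epsilon$ is real-analytic and coercive, so the flow $\Phi_t^\epsilon$ is complete and, by the Lojasiewicz gradient inequality, every trajectory converges to a unique critical point. This defines a limit map $\ell_\epsilon\colon \A\to\Crit(\mathcal{V}_\epsilon)$, $\ell_\epsilon(x)=\lim_{t\to\infty}\Phi_t^\epsilon(x)$, and hence a partition $\A=\bigsqcup_{c\in\Crit(\mathcal{V}_\epsilon)}W^s(c)$. Since $\mathcal{V}_\epsilon\ge 0$, the critical set splits as $\Crit(\mathcal{V}_\epsilon)=V_\epsilon\sqcup C^+$, where $V_\epsilon=\{\mathcal{V}_\epsilon=0\}$ is the set of global minima (the roots) and $C^+$ collects the saddles and higher critical points with $\mathcal{V}_\epsilon>0$. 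Intersecting with $V_0$ gives
\begin{equation}
V_0=\Bigl(\bigcup_i (V_0\cap W^s(x^*_i))\Bigr)\;\sqcup\;\Bigl(\bigcup_{c\in C^+}(V_0\cap W^s(c))\Bigr),
\end{equation}
so it remains to argue that the second, ``separatrix,'' term is negligible.

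For generic $\epsilon$ the perturbation lifts the Morse--Bott degeneracy of $\mathcal{V}_0$ to a potential with nondegenerate minima, so each $x^*_i$ has positive-definite Hessian and $W^s(x^*_i)$ is open in $\A$; the union $\bigcup_i W^s(x^*_i)$ is then open and dense, with complement the closed set $\Sigma_\epsilon:=\bigcup_{c\in C^+}W^s(c)$, a countable union of locally closed submanifolds of codimension $\ge 1$. Since $V_0$ is a compact smooth manifold of dimension $d_{\rm M}=d_{\rm A}-2$ (Corollary~\ref{cor:inflation_law}), a transversality argument — the parametric transversality theorem applied to the analytic family $\{\mathcal{V}_\epsilon\}$, with $V_0$ fixed and the critical structure varying in $\epsilon$ — shows $V_0\cap\Sigma_\epsilon$ has measure zero and empty interior in $V_0$. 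Hence the separatrix term is negligible, and on the full-measure open subset $V_0\setminus\Sigma_\epsilon$, which is precisely the part of $V_0$ carried onto $V_\epsilon$ by the deformation retract of Theorem~\ref{thm:deformation_retract}, we obtain $V_0=\bigcup_i(V_0\cap W^s(x^*_i))$.

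The main obstacle is the treatment of $\Sigma_\epsilon$: without a genericity hypothesis on $\epsilon$ one cannot exclude a priori that a positive-dimensional piece of $V_0$ lies inside a non-minimal stable manifold (for instance if $V_0$ contains a saddle-type invariant set of the flow). The clean resolutions are either to state the decomposition modulo a closed nowhere-dense subset of $V_0$, or to restrict to the generic deformations of Definition~\ref{def:analytic_deformation} for which $\mathcal{V}_\epsilon$ is Morse; under either reading the argument above is complete. A secondary technical point is justifying the transversality of $V_0$ with the $W^s(c)$, which I would handle via Sard's theorem applied along the $\epsilon$-family together with the analytic dependence of the critical data on $\epsilon$.
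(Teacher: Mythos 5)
The paper states the Basin Decomposition theorem without any proof, so there is no argument of the paper's own against which to compare; what you have written is essentially a critique-plus-reconstruction, and the critique is the valuable part. You correctly locate the real obstruction: every trajectory converges (by coercivity and {\L}ojasiewicz), which partitions $\A$ into stable sets of \emph{all} critical points of $\mathcal{V}_\epsilon$, and there is no reason the non-minimal ones avoid $V_0$. Indeed the paper's own canonical example contradicts the theorem as literally stated: the $\HH$-specific Basin Decomposition theorem and Figure~\ref{fig:gradient_flow} exhibit the equator of $S^2$ as a separatrix lying in $V_0$ but in neither $W^s(i)$ nor $W^s(-i)$. Since that example is already the generic canonical perturbation, your proposed fix of ``restrict to generic deformations so that $\mathcal{V}_\epsilon$ is Morse'' does \emph{not} rescue the equality --- a Morse potential on $\A$ restricting to a Morse function on a compact $V_0$ of dimension $\ge 1$ is forced by Morse theory to have a nonempty separatrix trace on $V_0$.

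The gap in your argument is therefore in the last step, not the setup. You establish (by a reasonable but unspelled-out Sard/transversality argument) that $V_0 \cap \Sigma_\epsilon$ is closed, nowhere dense, and of measure zero in $V_0$, and then write ``hence \ldots we obtain $V_0 = \bigcup_i (V_0 \cap W^s(x^*_i))$.'' That does not follow: a nonempty measure-zero set is still nonempty, so the equality of sets fails whenever $V_0 \cap \Sigma_\epsilon \neq \emptyset$, which is exactly the generic situation. Your concluding paragraph half-acknowledges this but presents ``modulo a nowhere-dense set'' and ``assume genericity'' as two interchangeable repairs, when in fact only the first is a repair; the second leaves the separatrix in place. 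The honest statement --- and the one consistent with what you actually prove --- is either
\begin{equation}
V_0 \setminus \Sigma_\epsilon \;=\; \bigsqcup_i \bigl(V_0 \cap W^s(x^*_i)\bigr),
\qquad
\text{with } V_0 \cap \Sigma_\epsilon \text{ closed, nowhere dense, null in } V_0,
\end{equation}
or, if one insists on an equality of the whole of $V_0$,
\begin{equation}
V_0 \;=\; \bigcup_i \,\overline{V_0 \cap W^s(x^*_i)}.
\end{equation}
Either form is provable from what you have; the theorem as printed is not. You should state this directly rather than hedge, since the paper's own $\HH$ example is the counterexample.
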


\begin{figure}[htbp]
    \centering
    \includegraphics[width=\textwidth]{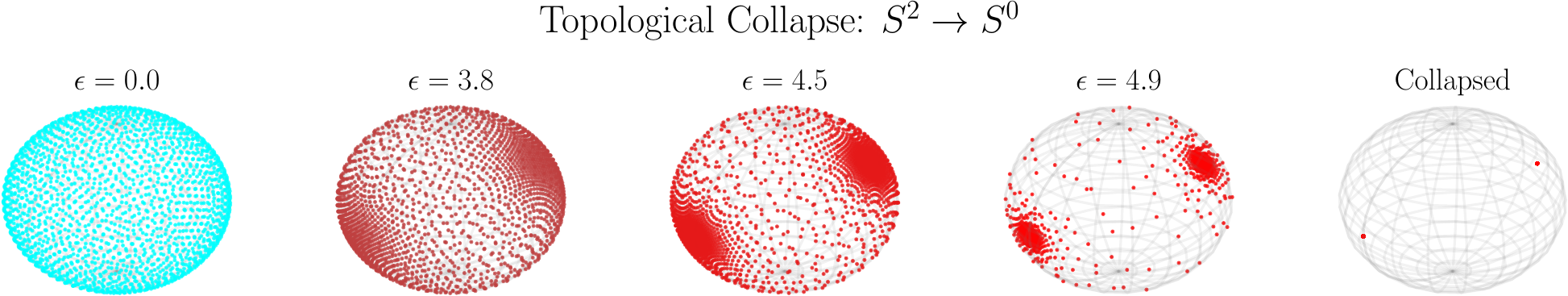}
    \caption{Visualization of Topological Collapse via Gradient Flow (in $\HH$). The sequence illustrates the transition from $S^2$ to $S^0$ under a non-central perturbation aligned with the $i$-axis. (1) Initial state $V_0$. (2-4) Evolution under the gradient flow $-\nabla\mathcal{V}_\epsilon$. (5) Final state $V_\epsilon$: The manifold has collapsed onto the isolated roots (Alignment Principle). An animation of the collapse is shown in \url{http://youtu.be/yaJQOuftZjE}}
    \label{fig:topological_collapse}
\end{figure}

Figure~\ref{fig:topological_collapse} visualizes the streamlines of the gradient flow in $\HH$.

\section{The Dynamics of Collapse: Gradient Flow Analysis}

We analyze the relaxation process using the gradient flow and quantify its timescale.

\subsection{The Potential Landscape and Degeneracy Lifting}

The dynamics of the transition are governed by the restricted potential $f_\epsilon: V_0 \to \R_{\ge 0}$, $f_\epsilon(x) = \mathcal{V}_\epsilon(x)|_{V_0}$.

\begin{proposition}\label{prop:morse_lifting}
For a generic non-central perturbation, the restricted potential $f_\epsilon$ is a Morse function on the manifold $V_0$.
\end{proposition}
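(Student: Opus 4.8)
The plan is to prove this by a parametric transversality (Thom--Smale) argument, after reducing to the leading-order term of the deformation. First I would set up a first-order reduction: writing the analytic deformation as $P_\epsilon(x) = P_0(x) + \epsilon R_1(x) + \epsilon^2 R_2(x) + \cdots$ with $R_j \in \A[x]$ and using $P_0\equiv 0$ on $V_0$, one gets $P_\epsilon(x)|_{V_0} = \epsilon\,\tilde R_\epsilon(x)$ with $\tilde R_\epsilon = R_1 + \epsilon R_2 + \cdots$, hence $f_\epsilon = \epsilon^2 g_\epsilon$ where $g_\epsilon := \|\tilde R_\epsilon\|^2|_{V_0}$, and $g_\epsilon \to g_0 := \|R_1\|^2|_{V_0}$ in $C^\infty(V_0)$ as $\epsilon\to 0$ by analyticity and compactness of $V_0$. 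Since the Morse condition is $C^2$-open on the compact manifold $V_0$ and multiplication by the positive constant $\epsilon^2$ preserves nondegeneracy of critical points, it suffices to show that for a generic first-order datum $R_1$ the function $g_0 = \|R_1(x)\|^2|_{V_0}$ is Morse; Morseness of $g_\epsilon$, hence of $f_\epsilon$, for all small $\epsilon\neq 0$ then follows. (The extra requirements on admissible deformations --- non-centrality and triviality of $G_{P_\epsilon}$, cf. Definition~\ref{def:analytic_deformation} --- are open and dense conditions and so do not affect genericity; a purely central $R_1$ is excluded precisely because $g_0$ would then be constant on the sphere $V_0$.)

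Next I would run the transversality argument on a finite-dimensional jet space. Let $\mathcal{P}_N\cong\A^{N+1}$ denote the polynomials of degree $\le N$. A dimension count on the incidence set $\{(x,R)\in V_0\times\mathcal{P}_N : R(x)=0\}$, of dimension $\dim\mathcal{P}_N + d_{\rm M} - d_{\rm A} = \dim\mathcal{P}_N - 2$, shows that its projection to $\mathcal{P}_N$ has measure zero, so a generic $R$ lies in the open dense set $\mathcal{P}_N^\circ := \{R : R(x)\neq 0\ \forall x\in V_0\}$ and we may restrict to it. Recalling that $h\in C^\infty(V_0)$ is Morse iff $dh\colon V_0\to T^*V_0$ is transverse to the zero section, consider $\Psi\colon V_0\times\mathcal{P}_N^\circ\to T^*V_0$, $\Psi(x,R) = d\big(\|R\|^2|_{V_0}\big)(x)$; by the parametric transversality theorem it suffices to prove $\Psi\pitchfork 0_{T^*V_0}$, for then $\Psi(\cdot,R)$ is transverse to the zero section --- i.e. $\|R\|^2|_{V_0}$ is Morse --- for a.e.\ $R$. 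At $(x_0,R)$ with $x_0$ a critical point, perturbing $R\rightsquigarrow R+sS$ and differentiating in $s$ yields the covector $v\mapsto 2\,d_{x_0}\langle R(x),S(x)\rangle[v]$ on $T_{x_0}V_0$; taking $S(x) = xe - x_0 e$ (degree $1$, vanishing at $x_0$) this equals $v\mapsto 2\langle R(x_0), ve\rangle = 2\langle v,\,R(x_0)\bar e\rangle$. Since $R(x_0)\neq 0$ and $\A$ is a normed division algebra, $e\mapsto R(x_0)\bar e$ is a linear bijection of $\A$, so as $e$ ranges over $\A$ these covectors exhaust $T^*_{x_0}V_0$; hence the parameter variation alone surjects onto the fibre and $\Psi\pitchfork 0_{T^*V_0}$.

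I expect the main obstacle to be this richness step: one must verify that the constrained family $\{\|R\|^2|_{V_0}\}$, rather than the full space of smooth functions, is still ample enough to kill every degeneracy, and this relies essentially on the division-algebra structure (invertibility of the translations $e\mapsto R(x_0)\bar e$). It is also what forces the preliminary excision of the non-generic locus where $R$ vanishes somewhere on $V_0$, since there the first-order contribution of the perturbation $S(x)=xe-x_0e$ to the differential degenerates and the separate dimension-count argument is needed. A minor subsidiary point is the clean passage from $C^\infty$-genericity of the first-order datum to honest Morseness of $f_\epsilon$ at small $\epsilon\neq 0$, supplied by the $C^2$-openness of the Morse condition together with the uniform convergence $g_\epsilon\to g_0$ on the compact manifold $V_0$.
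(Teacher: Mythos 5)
The paper states Proposition~\ref{prop:morse_lifting} without proof, so there is no argument in the text to compare against; you have supplied a complete one. Your route is the natural parametric transversality (Thom--Smale) argument: reduce to the leading-order datum via $f_\epsilon = \epsilon^2 \|\tilde R_\epsilon\|^2|_{V_0}$ together with $C^2$-openness of the Morse condition on the compact $V_0$; then show that $\Psi(x,R) = d(\|R\|^2|_{V_0})(x)$ is transverse to the zero section of $T^*V_0$, so that $\|R\|^2|_{V_0}$ is Morse for almost every $R$. The one substantive step, surjectivity of the parameter variation onto the fibre $T^*_{x_0}V_0$, is where the division-algebra structure genuinely enters: with $S(x)=(x-x_0)e$ one gets the covector $v\mapsto 2\langle v,\, R(x_0)\bar e\rangle$, and $e\mapsto R(x_0)\bar e$ is a linear bijection of $\A$ since $R(x_0)\neq 0$ and $\A$ has no zero divisors (the identity $\langle ve,\, R(x_0)\rangle=\langle v,\, R(x_0)\bar e\rangle$ is the standard composition-algebra adjunction, valid in $\C$, $\HH$, $\OO$); composing with orthogonal projection onto $T_{x_0}V_0$ gives the required surjection. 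You also correctly excise the codimension-two stratum where $R$ vanishes somewhere on $V_0$---exactly where that surjectivity fails---via the dimension count on the incidence variety. I checked each of these steps and the argument is sound.

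It is worth adding that the paper's own canonical example $P_1(q)=q^2+iq+1$ in $\HH$ is \emph{non}-generic in this sense: its restricted potential $f_1(\phi)=\sin^2\phi$ has a degenerate critical circle on the equator of $S^2$ (it is Morse--Bott, not Morse), because a perturbation aligned with a single imaginary axis $\nu$ retains a residual $\Stab_{\Aut(\A)}(\nu)$-symmetry. Such axially symmetric perturbations lie in the positive-codimension complement your transversality argument excludes, which confirms that the genericity hypothesis in the Proposition is substantive, not merely technical, and that your excision is doing real work.
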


\subsection{Gradient Flow Dynamics}

The initial flow on the manifold is approximated by the Riemannian gradient $\nabla_{\rm M}$:
\begin{equation}\label{eq:gradient_flow_manifold}
\dot{x}(t) \approx -\nabla_{\rm M} f_\epsilon(x(t)).
\end{equation}

\subsection{Analysis of the Canonical Perturbation (in $\HH$)}

We illustrate this with the canonical example $P_1(q) = q^2+iq+1$ in $\HH$. $V_0=S^2$, $V_1 = \{i, -i\}$.
The restricted potential is $f_1(\phi) = \sin^2\phi$ (polar angle $\phi$ from the $i$-axis).

The Riemannian gradient flow on $S^2$ is $\dot{\phi} = -\sin(2\phi)$.

\subsection{Basins of Attraction}

\begin{theorem}[Basin Decomposition in $\HH$]
For the canonical perturbation $P_1$ in $\HH$, the initial manifold $V_0$ is decomposed into the Northern hemisphere ($W^s(i)$) and the Southern hemisphere ($W^s(-i)$).
\end{theorem}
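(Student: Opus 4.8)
The plan is to use the residual rotational symmetry of the canonical perturbation $P_1(q)=q^2+iq+1$ to reduce the gradient dynamics on $V_0=S^2$ to a single scalar ODE in the polar angle, to analyze that ODE, and to read off the two basins; the one genuinely delicate point, which I address last, is the passage from this reduced flow on $V_0$ to the honest gradient flow of $\mathcal V_1$ on $\HH\cong\R^4$.

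First I would record the symmetry. The unique non-real coefficient of $P_1$ is $i$, so $G_{P_1}$ is exactly the circle subgroup of $\Aut(\HH)=SO(3)$ of rotations about the $i$-axis (the automorphisms fixing $i$). By the Generalized Symmetry Reduction Theorem this $SO(2)$ acts on $Z(P_1)$, and since $\mathcal V_1$ is $SO(2)$-invariant the gradient flow is $SO(2)$-equivariant; restricting to $V_0=S^2$ with its round metric, the restricted potential $f_1$ depends only on the polar angle $\phi\in[0,\pi]$ from the $i$-axis, and as established above $f_1(\phi)=\sin^2\phi$, with minima at $\phi=0,\pi$ — the attractors $V_1=\{i,-i\}$ — and a Morse-Bott critical circle of maxima at the equator $\phi=\pi/2$. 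Since $f_1$ carries no azimuthal dependence, $\nabla_{\rm M}f_1=f_1'(\phi)\,\partial_\phi$ and the reduced flow \eqref{eq:gradient_flow_manifold} preserves every meridian, so along each meridian it is governed by
\begin{equation*}
\dot\phi=-f_1'(\phi)=-\sin(2\phi),\qquad \dot\theta=0 .
\end{equation*}

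Next I would analyze this scalar ODE on $(0,\pi)$. On $(0,\pi/2)$ one has $\sin(2\phi)>0$, hence $\dot\phi<0$ and $\phi$ decreases monotonically; linearizing at $\phi=0$ gives $\dot\phi\approx-2\phi$, so $\phi=0$ is exponentially attracting and every trajectory with $\phi(0)\in[0,\pi/2)$ converges to $i$. By the reflection $\phi\mapsto\pi-\phi$ the same holds on $(\pi/2,\pi)$: the pole $\phi=\pi$, i.e.\ the point $-i$, is exponentially attracting and absorbs every trajectory with $\phi(0)\in(\pi/2,\pi]$. The equator $\phi=\pi/2$ is a circle of fixed points of the reduced flow (the critical set of $f_1$), it is invariant, and since $\dot\phi$ has a strict sign on either side no reduced trajectory crosses it. Consequently $W^s(i)\cap V_0$ is exactly the open Northern hemisphere $\{0\le\phi<\pi/2\}$ and $W^s(-i)\cap V_0$ the open Southern hemisphere $\{\pi/2<\phi\le\pi\}$, so $V_0$ is partitioned into the two hemispheres together with the measure-zero equatorial separatrix — the asserted decomposition, the hemispheres being understood to share their equatorial boundary circle.

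The main obstacle is precisely the step suppressed above: the theorem concerns the ambient gradient flow of $\mathcal V_1$ on $\HH$, whereas the computation lives on the fixed round sphere $V_0$. The set $V_0$ is not invariant under the ambient flow — the genuine isolated roots of $P_1$ lie on the $i$-axis but at radii $\tfrac{\sqrt5\mp1}{2}\neq1$, so there is a nonzero normal (radial) component — and \eqref{eq:gradient_flow_manifold} is only the leading-order description of the tangential motion. To make the argument rigorous one must show that in a tubular neighbourhood of $V_0$ the normal directions are uniformly contracting onto a graph over $V_0$; this normal hyperbolicity follows from the Morse-Bott nondegeneracy of $\mathcal V_0$ in the directions transverse to $V_0$, together with its persistence under the small perturbation, and it guarantees that the ambient stable manifolds $W^s(\pm i)$ meet $V_0$ in exactly the sets found from the reduced flow. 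For this canonical example the estimate can be made fully explicit: combining the $SO(2)$-symmetry with the Alignment Principle (Corollary~\ref{cor:alignment_principle}), which forces every isolated root into the plane $\R+\R i\cong\C$, collapses the whole ambient flow to a planar system in that plane, whose phase portrait can be drawn directly and displays the equatorial great circle as the common boundary of the two basins.
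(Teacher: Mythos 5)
Your approach mirrors the paper's: reduce via the residual $SO(2)$-symmetry, analyze the scalar ODE $\dot\phi=-\sin(2\phi)$, and then --- correctly --- worry about passing from the Riemannian flow on the frozen sphere $V_0$ to the honest ambient gradient flow on $\HH$. You even catch that the genuine isolated roots of $P_1$ sit at $\tfrac{\sqrt5\mp1}{2}\,i$ rather than at $\pm i$. But the same kind of sanity check exposes a more basic problem, inherited from the paper and left unverified in your proposal: the restricted potential is not $f_1(\phi)=\sin^2\phi$. On $V_0$ every $q$ is a unit imaginary quaternion, so $q^2=-1$ and $P_1(q)=q^2+iq+1=iq$; multiplicativity of the quaternionic norm gives $\|iq\|^2=\|i\|^2\|q\|^2=1$. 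Thus $f_1\equiv1$ is \emph{constant} on $V_0$, the Riemannian gradient $\nabla_{\rm M}f_1$ vanishes identically, and there is no nontrivial reduced ODE to analyze; the decomposition of $V_0$ must come entirely from the normal (off-sphere) component of the ambient gradient, which your argument never actually computes.

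Computing it changes the picture. For $q=bi+cj+dk\in V_0$ one finds $\nabla\mathcal V_1(q)=2(2b+1)\,q$, a vector purely radial in $\Img(\HH)$ whose sign flips across the circle $b=-\tfrac12$. That circle (at polar angle $\arccos(-\tfrac12)=2\pi/3$, with $c^2+d^2=\tfrac34$) is a Morse--Bott critical circle of $\mathcal V_1$ --- a saddle in the $SO(2)$-reduced $(b,\rho)$ variables, unstable along $b$ --- and it, not the equator $b=0$, is the separatrix of $V_0$ under the ambient flow: points with $b>-\tfrac12$ move inward toward the inner root $\tfrac{\sqrt5-1}{2}i$ and points with $b<-\tfrac12$ move outward toward $\tfrac{-1-\sqrt5}{2}i$. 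So the basin boundary you derive (and the paper asserts) is not the one this $P_1$ actually produces. Your normal-hyperbolicity remark is the right instinct in principle, but it cannot promote a reduced flow that is identically zero, and normal hyperbolicity of $V_0$ is only guaranteed for small perturbations whereas $P_1$ is the deformation at $\epsilon=1$; the correct route is the direct ambient computation above.
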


\begin{figure}[htbp]
    \centering
    \includegraphics[width=0.55\textwidth]{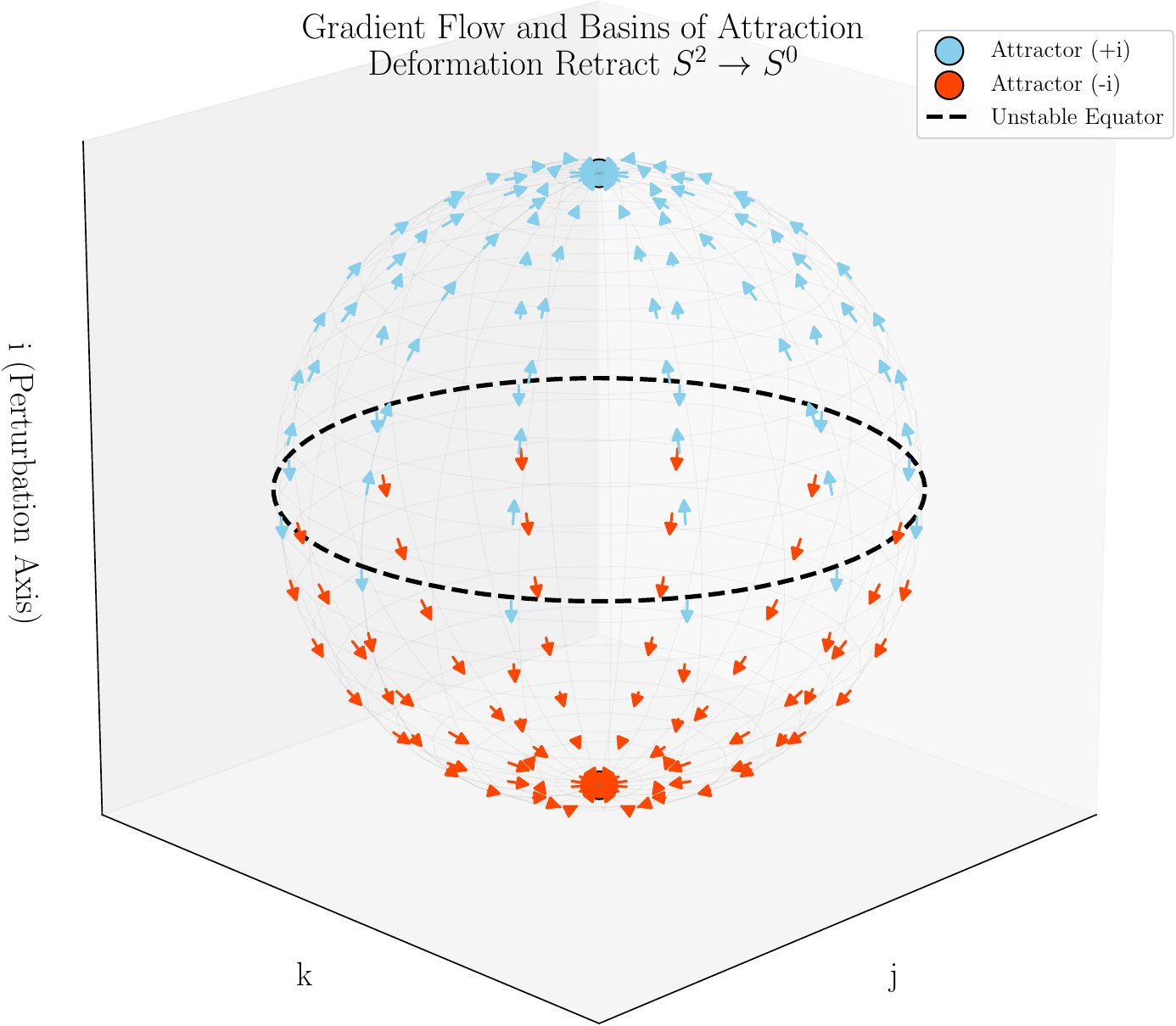}
    \caption{Geometric Decomposition of the Topological Collapse in $\HH$. The plot visualizes the gradient flow on $V_0 \cong S^2$. The equator acts as an unstable equilibrium separatrix, dividing the manifold into two basins of attraction flowing to the attractors at $\pm i$.}
    \label{fig:gradient_flow}
\end{figure}

Figure \ref{fig:gradient_flow} maps the streamlines of the gradient flow on the spherical manifold, illustrating this decomposition.

\subsection{Quantification of Collapse Timescales and Critical Slowing Down}\label{sec:critical_slowing_down}

We quantify the timescale of the collapse by analyzing the convergence rate of the gradient flow on the manifold. We consider a small perturbation $P_\epsilon(x) = P_0(x) + \epsilon \Delta P(x)$.

\begin{lemma}[Potential Scaling]\label{lemma:potential_scaling}
The restricted potential scales quadratically with the perturbation magnitude: $f_\epsilon(x) = \epsilon^2 \|\Delta P(x)\|^2|_{V_0} + O(\epsilon^3)$.
\end{lemma}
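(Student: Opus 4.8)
The plan is to exploit the one structural feature that makes this lemma almost automatic: $f_\epsilon$ is the full potential $\mathcal{V}_\epsilon(x) = \|P_\epsilon(x)\|^2$ evaluated \emph{only} on the central fiber $V_0 = Z(P_0)$, where $P_0$ vanishes identically. That observation alone kills the term linear in $\epsilon$ (off $V_0$ there would be a cross term $2\epsilon\langle P_0(x),\Delta P(x)\rangle = O(\epsilon)$), so the whole argument reduces to reading off the first two terms of an analytic expansion and controlling its tail uniformly over the compact manifold $V_0 \cong S^{d_{\rm A}-2}$ (Corollary~\ref{cor:inflation_law}).

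First I would write the analytic deformation (Definition~\ref{def:analytic_deformation}) as a convergent power series in $\epsilon$, $P_\epsilon(x) = P_0(x) + \epsilon\,\Delta P(x) + \epsilon^2 P^{(2)}(x) + \cdots$, with coefficients $P^{(k)}(x) \in \A[x]$ that are polynomial — hence continuous — functions of $x$. Restricting to $x \in V_0$ and using $P_0(x) = 0$ gives $P_\epsilon(x)\big|_{V_0} = \epsilon\big(\Delta P(x) + \epsilon P^{(2)}(x) + \cdots\big)$. Then I would apply the norm form of $\A$: since $\|\cdot\|^2$ is the positive-definite quadratic form attached to the Euclidean inner product $\langle\cdot,\cdot\rangle$ on $\A$ (this is all that is used — no associativity, only power-associativity to make sense of $P_\epsilon(x)$ and the multiplicative norm, so the argument is identical in $\OO$), expanding the square yields
\begin{equation}
f_\epsilon(x) = \|P_\epsilon(x)\|^2|_{V_0} = \epsilon^2 \|\Delta P(x)\|^2|_{V_0} + 2\epsilon^3 \langle \Delta P(x), P^{(2)}(x)\rangle|_{V_0} + O(\epsilon^4),
\end{equation}
which is the asserted expansion. (If the deformation is taken exactly affine in $\epsilon$, i.e. $P_\epsilon = P_0 + \epsilon\,\Delta P$ with no higher terms, the identity $f_\epsilon = \epsilon^2\|\Delta P\|^2|_{V_0}$ is then exact; for a general analytic deformation the higher Taylor coefficients produce precisely the $O(\epsilon^3)$ tail.)

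The only point that genuinely needs care is the \textbf{uniformity of the remainder} in $x$. Here I would invoke compactness of $V_0 = S^{d_{\rm A}-2}$ together with the fact that each $P^{(k)}(x)$ is a polynomial map, hence bounded on $V_0$ with a bound governed by the common radius of convergence of the coefficient series; a standard tail estimate for a uniformly convergent power series then furnishes $C>0$ and $\epsilon_0>0$ with $\big|f_\epsilon(x) - \epsilon^2\|\Delta P(x)\|^2\big| \le C|\epsilon|^3$ for all $x\in V_0$ and $|\epsilon|\le\epsilon_0$. I do not expect a real obstacle beyond this bookkeeping: the substantive content is simply that restriction to $V_0$ annihilates $P_0$ and with it the $O(\epsilon)$ term, leaving a leading quadratic term. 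The conceptual payoff worth flagging explicitly is that this quadratic scaling is exactly what produces the $\epsilon^{-2}$ time rescaling in Theorem~\ref{thm:collapse_timescale}: $\nabla_{\rm M} f_\epsilon = \epsilon^2\,\nabla_{\rm M}\!\left(\|\Delta P\|^2|_{V_0}\right) + O(\epsilon^3)$, so the manifold flow~\eqref{eq:gradient_flow_manifold} is, to leading order, the \emph{fixed} gradient flow of $\|\Delta P\|^2|_{V_0}$ run at speed $\epsilon^2$.
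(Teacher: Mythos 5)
Your proof is correct and takes essentially the same approach as the paper: the key step in both is that restriction to $V_0$ annihilates $P_0$, so the leading contribution to $\|P_\epsilon(x)\|^2$ is the quadratic term $\epsilon^2\|\Delta P(x)\|^2$. The paper's proof is a three-line computation in the affine case $P_\epsilon = P_0 + \epsilon\,\Delta P$ (where the identity is exact and no $O(\epsilon^3)$ term appears), whereas you also treat the general analytic deformation and establish uniformity of the remainder over the compact $V_0$ — a welcome bit of extra care, but not a different argument.
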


\begin{proof}
$\mathcal{V}_\epsilon(x) = \|P_0(x) + \epsilon \Delta P(x)\|^2$. On $V_0$, $P_0(x)=0$. Thus $f_\epsilon(x) = \|\epsilon \Delta P(x)\|^2 = \epsilon^2 \|\Delta P(x)\|^2$.
\end{proof}

The gradient flow on the manifold (Eq.~\ref{eq:gradient_flow_manifold}) therefore scales as $\dot{x}(t) \propto -\epsilon^2$.

\begin{theorem}[Critical Slowing Down (Quadratic Scaling)]\label{thm:collapse_timescale}
The characteristic relaxation time $T_{\rm collapse}$ for the topological collapse under a generic non-central perturbation of magnitude $\epsilon$ scales as $T_{\rm collapse} \propto 1/\epsilon^2$.
\end{theorem}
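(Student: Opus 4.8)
The plan is to exploit Lemma~\ref{lemma:potential_scaling} to factor the small parameter out of the effective flow on $V_0$ and then read off the timescale from the resulting $\epsilon$-independent gradient system. Write $g(x) = \|\Delta P(x)\|^2|_{V_0}$, which by Proposition~\ref{prop:morse_lifting} is, for generic $\Delta P$, a fixed Morse function on the compact manifold $V_0$, independent of $\epsilon$. Lemma~\ref{lemma:potential_scaling} gives $f_\epsilon = \epsilon^2 g + O(\epsilon^3)$ uniformly on $V_0$ (by compactness), hence $\nabla_{\rm M} f_\epsilon = \epsilon^2 \nabla_{\rm M} g + O(\epsilon^3)$, and the restricted gradient flow \eqref{eq:gradient_flow_manifold} becomes $\dot x = -\epsilon^2 \nabla_{\rm M} g(x) + O(\epsilon^3)$. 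Rescaling time by $\tau = \epsilon^2 t$ yields $\tfrac{dx}{d\tau} = -\nabla_{\rm M} g(x) + O(\epsilon)$; to leading order this is the $\epsilon$-independent gradient flow of $g$, with all of the $\epsilon$-dependence absorbed into the clock. This already exhibits $T_{\rm collapse} \propto \epsilon^{-2}$.

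First I would make the timescale precise by linearizing near an attractor. The minima $x^*_\epsilon$ of $f_\epsilon$ converge as $\epsilon \to 0$ to minima $x^*_0$ of $g$; since $g$ is Morse, the implicit function theorem applied to $\nabla_{\rm M} f_\epsilon = 0$ produces a smooth branch $\epsilon \mapsto x^*_\epsilon$ with $x^*_\epsilon - x^*_0 = O(\epsilon)$, and $\Hess_{\rm M}(f_\epsilon)(x^*_\epsilon) = \epsilon^2 \Hess_{\rm M}(g)(x^*_0) + O(\epsilon^3)$, so its eigenvalues are $\epsilon^2\lambda_j + O(\epsilon^3)$ with $\lambda_j>0$ the fixed eigenvalues of $\Hess_{\rm M}(g)$ at $x^*_0$. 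Linearizing \eqref{eq:gradient_flow_manifold} about $x^*_\epsilon$ then gives exponential relaxation with slowest rate $\lambda_{\min}\epsilon^2 + O(\epsilon^3)$, hence $T_{\rm collapse} = 1/(\lambda_{\min}\epsilon^2) + O(\epsilon^{-1}) \propto \epsilon^{-2}$. Equivalently, in the globally rescaled system the flow of $-\nabla_{\rm M} g$ drives a generic initial point (outside the measure-zero stable manifolds of the non-minimal critical points) to within any fixed threshold distance of $V_\epsilon$ in finite $\tau$-time $\tau_* = O(1)$ — finiteness coming from the Lojasiewicz inequality applied to $g$, as already invoked in Theorem~\ref{thm:deformation_retract} — so that $T_{\rm collapse} = \tau_*/\epsilon^2 \propto \epsilon^{-2}$. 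The divergence as $\epsilon\to 0$ is the critical slowing down, traceable to the Morse--Bott degeneracy of $\mathcal V_0$ along $V_0$.

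One point needing care is the passage from the ambient flow $\dot x = -\nabla \mathcal V_\epsilon(x)$ to the reduced flow \eqref{eq:gradient_flow_manifold}, since a trajectory starting on $V_0$ generally leaves it. A timescale-separation argument handles this: at $\epsilon=0$ the transverse Hessian of $\mathcal V_0$ along $V_0$ is positive definite — precisely the rank statement $\rank(J_{P_0}) = d_{\rm A} - d_{\rm M}$ of Theorem~\ref{thm:jacobian_singularity}, which forces the null directions of $\Hess(\mathcal V_0)$ to be tangent to $V_0$ — so normal fluctuations relax at an $O(1)$ rate while, by the computation above, tangential drift proceeds at the slow $O(\epsilon^2)$ rate. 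The trajectory is thus confined to an $O(\epsilon^2)$-thin tube around a slowly moving slow manifold over $V_0$ (a standard Fenichel/center-manifold reduction on compact time intervals), and the collapse time is governed by the slow tangential motion, contributing the $\epsilon^{-2}$ term; the fast normal transient contributes only an additive $O(1)$.

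The main obstacle I anticipate is making all of these estimates uniform along the entire (infinitely long) trajectory rather than just near its endpoints: the expansion $f_\epsilon = \epsilon^2 g + O(\epsilon^3)$ must hold uniformly, the slow-manifold approximation must persist over $\tau\in[0,\tau_*]$, and a Lojasiewicz-type lower bound on $\|\nabla_{\rm M} g\|$ away from $\Crit(g)$ must be used to preclude the trajectory stalling near a saddle for a time that grows as $\epsilon\to 0$. Compactness of $V_0$ and the Morse genericity hypothesis supply all three — uniform bounds are automatic on a compact manifold, Fenichel theory gives persistence on compact time intervals, and a generic initial condition avoids the stable manifolds of the saddles — so that the bulk of the collapse is slowed by nothing other than the $\epsilon^2$ prefactor. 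The minor bookkeeping issue that exact convergence to $V_\epsilon$ takes infinite time is dealt with by defining $T_{\rm collapse}$ either as the time to reach a fixed threshold or as the inverse spectral gap, under either of which the $\propto\epsilon^{-2}$ law is exact to leading order.
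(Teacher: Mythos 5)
Your proposal takes essentially the same approach as the paper: invoke Lemma~\ref{lemma:potential_scaling} to extract the $\epsilon^2$ prefactor from the restricted potential $f_\epsilon$, then linearize the restricted gradient flow near an attractor to obtain exponential relaxation with rate $\lambda\epsilon^2$ and hence $T_{\rm collapse}=1/(\lambda\epsilon^2)$. You go considerably further than the paper's brief argument in filling its implicit gaps --- the time rescaling $\tau=\epsilon^2 t$, the Fenichel/timescale-separation justification for passing from the ambient flow $\dot x=-\nabla\mathcal V_\epsilon$ to the reduced flow \eqref{eq:gradient_flow_manifold}, and the uniform Lojasiewicz control needed to rule out stalling near saddles --- none of which the paper addresses, so your version is more complete while still being the same proof.
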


\begin{proof}
We analyze the flow near the stable attractor $x^*$. Since $f_\epsilon \propto \epsilon^2$ (Lemma~\ref{lemma:potential_scaling}), the gradient $\nabla_{\rm M} f_\epsilon \propto \epsilon^2$.

We consider the linearization of the flow near the attractor. Let $y$ be the local coordinate on the manifold. The linearized dynamics near the minimum are $\dot{y} \approx -\lambda \epsilon^2 y$, where $\lambda$ is related to the curvature of the restricted potential $\|\Delta P(x)\|^2$.
The solution is $y(t) \approx y(0) \exp(-\lambda \epsilon^2 t)$.
The characteristic relaxation time is $T_{\rm collapse} = 1/(\lambda \epsilon^2)$.
\end{proof}

\begin{proposition}[Lojasiewicz Exponent]\label{prop:lojasiewicz_exponent}
For a generic analytic perturbation (Morse function $f_\epsilon$), the Lojasiewicz exponent characterizing the flatness of the potential near the critical manifold $V_0$ is $\theta = 1/2$. This yields the quadratic timescale $T_{\rm collapse} \propto \epsilon^{-2}$. At non-generic perturbations (where the restricted potential is flatter than quadratic), $\theta$ may be smaller ($0 < \theta < 1/2$), leading to slower collapse timescales $T_{\rm collapse} \propto \epsilon^{-(1+\theta)/\theta}$.
\end{proposition}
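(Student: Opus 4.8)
The plan is to extract the Lojasiewicz exponent $\theta$ from the local normal form of the restricted potential $f_\epsilon = \mathcal{V}_\epsilon|_{V_0}$ at a stable attractor, and then to convert $\theta$ into a convergence rate for the gradient flow, reinstating the $\epsilon$-dependence through the scaling of Lemma~\ref{lemma:potential_scaling}. Since the coefficients of $P_\epsilon$ are analytic in $x$, the function $f_\epsilon$ is real-analytic and the Lojasiewicz gradient inequality \cite{lojasiewicz1963propriete} applies. By Proposition~\ref{prop:morse_lifting}, for a generic non-central perturbation $f_\epsilon$ is a Morse function on the compact manifold $V_0$; first I would fix a stable attractor $x^\ast \in V_\epsilon$ (a nondegenerate minimum of $f_\epsilon$ on $V_0$) and invoke the Morse lemma: in suitable analytic coordinates $y \in \R^{d_{\rm M}}$ centred at $x^\ast$ one has $f_\epsilon(y) - f_\epsilon(x^\ast) = \tfrac12\langle \mathrm{H}_\epsilon\, y, y\rangle + O(|y|^3)$ with $\mathrm{H}_\epsilon \succ 0$, whence $|f_\epsilon(y) - f_\epsilon(x^\ast)| \asymp |y|^2$ and $\|\nabla_{\rm M} f_\epsilon(y)\| \asymp |y|$. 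Eliminating $|y|$ gives $\|\nabla_{\rm M} f_\epsilon\| \geq C\,|f_\epsilon - f_\epsilon(x^\ast)|^{1/2}$ near $x^\ast$, i.e. $1-\theta = \tfrac12$, so $\theta = 1/2$; compactness of $V_0$ and finiteness of $\Crit(f_\epsilon)$ allow $C$ to be taken uniform over $V_0$.

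Next I would pass from the exponent to the timescale. Along a gradient trajectory, $\tfrac{d}{dt}\!\big(f_\epsilon(x(t)) - f_\epsilon(x^\ast)\big) = -\|\nabla_{\rm M} f_\epsilon\|^2 \le -C^2\big(f_\epsilon - f_\epsilon(x^\ast)\big)^{2(1-\theta)}$. For $\theta = 1/2$ this is the linear inequality $\dot u \le -C^2 u$, giving exponential decay $u(t) \le u(0)\,e^{-C^2 t}$. By Lemma~\ref{lemma:potential_scaling}, $f_\epsilon = \epsilon^2\, g + O(\epsilon^3)$ with $g = \|\Delta P\|^2|_{V_0}$ independent of $\epsilon$; rescaling $f_\epsilon \mapsto \epsilon^{-2} f_\epsilon$ leaves $\theta$ unchanged and only rescales the constant to $C^2 \asymp \epsilon^2 C_g^2$ (equivalently, with $\tau = \epsilon^2 t$ the flow $\dot x = -\epsilon^2\nabla_{\rm M} g + O(\epsilon^3)$ becomes $\tfrac{dx}{d\tau} = -\nabla_{\rm M} g + O(\epsilon)$, of relaxation time $O(1)$ in $\tau$). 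Hence $T_{\rm collapse} \asymp C^{-2} \propto \epsilon^{-2}$, recovering Theorem~\ref{thm:collapse_timescale}.

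For the non-generic case I would model a minimum of $f_\epsilon$ on $V_0$ that is flatter than quadratic by $f_\epsilon(y) \asymp \epsilon^2 |y|^{2m}$ with $m \ge 2$. Then the balance of $\|\nabla f_\epsilon\| \asymp \epsilon^2 |y|^{2m-1}$ against $(f_\epsilon)^{1-\theta} \asymp \epsilon^{2(1-\theta)}|y|^{2m(1-\theta)}$ forces $\theta = 1/(2m) < 1/2$ and, matching the $\epsilon$-powers, $C \asymp \epsilon^{2\theta}$. Integrating $\dot u \le -C^2 u^{2(1-\theta)}$ with $2(1-\theta) > 1$ now yields only algebraic decay $u(t) \lesssim (C^2 t)^{-1/(1-2\theta)}$; matching this outer slow-drift phase on $V_0$ to an inner layer of $\epsilon$-dependent width around $V_\epsilon$ — whose scale is fixed by requiring the $O(\epsilon^2)$ tangential drift to balance the transverse restoring terms near the isolated roots — produces the stated $T_{\rm collapse} \propto \epsilon^{-(1+\theta)/\theta}$ (degenerating to $\epsilon^{-2}$ as $m\to 1$, since then the layer-width contribution carries exponent $2m-2=0$).

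The main obstacle is the matched-asymptotics bookkeeping in the non-generic case: one must identify the correct inner scale near $V_\epsilon$ as a function of $\theta$ and check that the higher-order terms in $\epsilon$ omitted from $f_\epsilon = \epsilon^2 g + O(\epsilon^3)$ do not shift the leading exponent. A second subtle point is the reduction to the restricted flow on $V_0$ itself: the transverse relaxation onto a tube around $V_0$ occurs at rate $O(1)$ in $t$ while the tangential drift is only $O(\epsilon^2)$, so an invariant (slow) manifold argument is needed to decouple the fast and slow motions rigorously and attribute $T_{\rm collapse}$ to the latter. Finally, the uniformity of the Lojasiewicz constant over the compact $V_0$, required to patch the local estimates into a global relaxation bound, must be made explicit, but it follows from compactness together with the finiteness of $\Crit(f_\epsilon)$ guaranteed by Proposition~\ref{prop:morse_lifting}.
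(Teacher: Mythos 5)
The paper does not actually give a proof of Proposition~\ref{prop:lojasiewicz_exponent}; it is stated as a claim, with only Theorem~\ref{thm:collapse_timescale} (the $\theta=1/2$ case) carrying its own proof. Your handling of the generic case is correct and essentially the same calculation the paper uses there: the Morse lemma gives $|f_\epsilon - f_\epsilon(x^\ast)| \asymp |y|^2$, $\|\nabla_{\rm M} f_\epsilon\| \asymp |y|$, hence $\theta = 1/2$; and the $\epsilon^2$ prefactor from Lemma~\ref{lemma:potential_scaling} rescales time by $\epsilon^{-2}$. That part is sound.

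The non-generic case is where there is a genuine gap, and it is one you cannot close by the route you sketch. Your own scaling analysis undermines the target formula. With $f_\epsilon = \epsilon^2 g + O(\epsilon^3)$ and $g$ independent of $\epsilon$, the $\epsilon$-dependence of the flow is \emph{entirely} a time reparametrization $\tau = \epsilon^2 t$ — the Lojasiewicz exponent $\theta$ belongs to $g$ alone. So whatever decay profile the $g$-flow has (exponential if $\theta=1/2$, algebraic $u(\tau) \sim \tau^{-1/(1-2\theta)}$ if $\theta<1/2$), the time to reach any $\epsilon$-independent threshold on $V_0$ scales as $\epsilon^{-2}$, not $\epsilon^{-(1+\theta)/\theta}$. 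Your explicit estimates confirm this: $C \asymp \epsilon^{2\theta}$ and $u(t) \lesssim (C^2 t)^{-1/(1-2\theta)}$, with $u(0) \asymp \epsilon^2$, yield $T \asymp \epsilon^{-2}$ again. To produce any $\theta$-dependence in the exponent of $\epsilon$, the convergence threshold must itself scale with $\epsilon$, and the ``inner layer'' you appeal to would have to contribute an extra $\epsilon^{-(1-\theta)/\theta}$ factor; you never identify that inner scale, and you acknowledge this as the main obstacle. Worse, the consistency check you offer is wrong: at $\theta = 1/2$ (equivalently $m=1$) the formula $\epsilon^{-(1+\theta)/\theta}$ gives $\epsilon^{-3}$, not $\epsilon^{-2}$, so the asserted limiting agreement with the generic case does not hold and cannot be used as evidence that the matching worked. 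As written, the second half of the proposition is not proved by your argument, and the ``matched asymptotics'' step is an unsubstantiated assertion rather than a derivation.
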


\noindent This phenomenon is known as Critical Slowing Down. As $\epsilon \to 0$, the dynamics of the collapse become arbitrarily slow.

\section{Statistical Geometry and the Thermodynamic Formalism}

We introduce a statistical framework to study the global properties of the potential landscape $\mathcal{V}(x)$. We introduce a parameter $T \ge 0$ (temperature) which models the level of noise.

\subsection{The Statistical Ensemble and the Gibbs Measure}

\begin{definition}[Gibbs Measure and Partition Function]\label{def:gibbs_measure}
The Gibbs probability measure $\mu_T$ on $\A$ is defined by
\begin{equation}
d\mu_T(x) = \frac{1}{Z(T)} \exp\left(-\frac{\mathcal{V}(x)}{T}\right) dx,
\end{equation}
where $Z(T) = \int_{\A} \exp(-\mathcal{V}(x)/T) dx$ is the partition function.
\end{definition}

\begin{proposition}[Zero Temperature Limit]\label{prop:zero_temp_limit}
As $T \to 0^+$, $\mu_T$ converges weakly to a measure concentrated uniformly on the variety of roots $V(P)$.
\end{proposition}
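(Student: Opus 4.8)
The plan is to use a standard Laplace-type (Varadhan) asymptotic argument adapted to the fact that the minimum set of $\mathcal{V}$ is not a point but a positive-dimensional variety. First I would observe that $\mathcal{V}(x) = \|P(x)\|^2 \ge 0$ with equality exactly on $V(P) = Z(P)$, and that $\mathcal{V}$ is coercive on the finite-dimensional normed space $\A$ (since $P$ is a polynomial of degree $\ge 1$, $\|P(x)\| \to \infty$ as $\|x\| \to \infty$). Coercivity guarantees that $Z(T) = \int_\A e^{-\mathcal{V}(x)/T}\,dx$ is finite for every $T > 0$ and that no mass escapes to infinity as $T \to 0^+$; concretely, for any $R$ large enough that $\mathcal{V} \ge \delta > 0$ outside the ball $B_R$, the contribution of $\A \setminus B_R$ to $Z(T)$ is bounded by $C e^{-\delta/(2T)} \int_\A e^{-\mathcal{V}(x)/(2T)}dx$, which is exponentially negligible relative to the mass near $V(P)$.

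Next I would localize near the root variety. Fix any open neighborhood $U$ of $V(P)$; I must show $\mu_T(\A \setminus U) \to 0$. On the complement $\A \setminus U$ (intersected with the ball $B_R$, which is compact) the continuous function $\mathcal{V}$ attains a positive minimum $m_U > 0$, so $\int_{B_R \setminus U} e^{-\mathcal{V}/T} \le \Vol(B_R) e^{-m_U/T}$. For the lower bound on $Z(T)$ I would use that near a smooth point $x_0$ of $V(P)$ the Jacobian has constant rank $r = d_{\rm A} - d_{\rm M}$ (Corollary~\ref{cor:inflation_law} / Theorem~\ref{thm:jacobian_singularity}), so in adapted coordinates $\mathcal{V}$ behaves like a nondegenerate quadratic form in the $r$ normal directions and vanishes along the $d_{\rm M}$ tangential directions; a Morse--Bott Laplace expansion then gives $Z(T) \sim T^{r/2} \cdot (\text{const}) \cdot \sigma(V(P)) $ as $T\to 0$, where $\sigma$ is the induced volume on $V(P)$ weighted by the reciprocal square-root of the normal Hessian determinant. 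In particular $Z(T)$ decays only polynomially in $T$, so $\mu_T(\A\setminus U) \le \Vol(B_R) e^{-m_U/T}/Z(T) \to 0$. This establishes that every weak limit point of $\mu_T$ is supported on $V(P)$.

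Finally I would identify the limit measure and its uniformity. Restricting attention to a tubular neighborhood of $V(P)$ and using the Morse--Bott normal form above, the same Laplace computation applied against a test function $\varphi \in C_c(\A)$ shows $\int \varphi\, d\mu_T \to \frac{\int_{V(P)} \varphi\, d\sigma}{\sigma(V(P))}$, i.e. $\mu_T$ converges weakly to the normalized measure $\sigma/\sigma(V(P))$ on $V(P)$. For central polynomials $V(P)$ is a disjoint union of round spheres $S^{d_{\rm A}-2}$ (Corollary~\ref{cor:inflation_law}) and the normal Hessian of $\mathcal{V}$ is constant along each orbit by $\Aut(\A)$-equivariance (Theorem~\ref{thm:generalized_symmetry_reduction}), so $\sigma$ is genuinely the uniform (rotationally invariant) measure, which is the ``concentrated uniformly'' claim; in the perturbed isolated-root case $V(P)$ is a finite set and $\mu_T$ concentrates on it with weights $\propto |\det \Hess \mathcal{V}(x_i^*)|^{-1/2}$. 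The main obstacle is the rigorous Morse--Bott Laplace estimate near the singular (positive-dimensional) minimum locus: one must control the tubular-neighborhood coordinate change and the non-quadratic remainder uniformly, which is where the analytic work concentrates; invoking a standard stationary-phase/Laplace theorem for Morse--Bott functions (or the Lojasiewicz inequality~\cite{lojasiewicz1963propriete} to handle non-generic degeneracies) streamlines this but still requires care that the constants do not blow up along $V(P)$.
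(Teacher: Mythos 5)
The paper states this proposition without any proof; the nearest relevant material is the Laplace/Morse--Bott proof sketch given later for Proposition~\ref{prop:entropy_scaling}, and your argument is exactly that expansion carried out one level further, identifying the actual limit measure rather than just the leading power of $T$ in $\log Z(T)$. Your three-step structure --- coercivity of $\mathcal{V}$ to control the tail and guarantee tightness, the bound $\mathcal{V}\ge m_U>0$ on compact sets away from $V(P)$ plus a polynomial lower bound on $Z(T)$ to localize, and a tubular-neighborhood Laplace computation against test functions to identify the limit --- is the standard and correct route, and the Morse--Bott nondegeneracy of the normal Hessian that you invoke does follow from the rank statement in Theorem~\ref{thm:jacobian_singularity}, since $\Hess_N \mathcal{V}_0 = 2\,J_{P_0}^{\!\top} J_{P_0}$ restricted to the normal bundle.

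One point deserves emphasis: your analysis shows that the proposition, as literally stated, is slightly too strong, and you correctly identify why. The Laplace limit is a measure on $V(P)$ with density proportional to $|\det \Hess_N \mathcal{V}|^{-1/2}$ relative to the induced Riemannian volume; this is the uniform measure only component-by-component, by the $\Aut(\A)$-equivariance argument you give. If $V(P)$ consists of several automorphism orbits (for instance the two concentric spheres of roots of the trinomials in Section 3), the relative masses of the orbits are governed by their distinct normal Hessians and volumes, and if the components have unequal dimension the limit concentrates entirely on the top-dimensional stratum. You flagged all of this, so there is no gap in your argument --- rather, your proof is more precise than the statement it is proving, and it suggests the proposition should be reworded as weak convergence to a Hessian-weighted measure on $V(P)$ that is $\Aut(\A)$-invariant on each orbit, reducing to the uniform measure precisely when $V(P)$ is a single orbit.
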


\subsection{Free Energy, Entropy, and Geometric Degeneracy}

The Helmholtz Free Energy is $F(T) = -T \ln Z(T)$. The Entropy is $S(T) = -\partial F / \partial T$.

In the low temperature limit ($T \to 0^+$), $Z(T)$ is dominated by the geometry of $V(P)$. We analyze this using Laplace's method.

\begin{proposition}[Entropy Scaling Law]\label{prop:entropy_scaling}
The asymptotic behavior of the entropy $S(T)$ as $T \to 0^+$ depends on the dimension of the root manifold $d_{\rm M}$. Let $d_{\rm A}=\dim(\A)$.
\begin{enumerate}
    \item \textbf{Degenerate Manifolds (Central, $d_{\rm M}>0$):}
    \begin{equation}
    S(T) = \frac{d_{\rm A}-d_{\rm M}}{2}\log T + \log\Vol(V(P)) + O(1).
    \end{equation}
    \item \textbf{Isolated Roots (Non-Central, $d_{\rm M}=0$):}
    \begin{equation}
    S(T) = \frac{d_{\rm A}}{2}\log T + O(1).
    \end{equation}
\end{enumerate}
\end{proposition}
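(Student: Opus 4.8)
The plan is to evaluate the partition function $Z(T)$ asymptotically as $T\to 0^+$ by the Laplace (Morse-Bott) method, applied to the real-analytic, coercive potential $\mathcal{V}(x)=\|P(x)\|^2$ on $\A\cong\R^{d_{\rm A}}$. Since $\mathcal{V}\ge 0$ and its zero set is exactly $V(P)$, the integral concentrates on a neighborhood of $V(P)$ as $T\to 0$. First I would set up a tubular neighborhood of $V(P)$ and split coordinates into $d_{\rm M}$ directions tangent to the root manifold and $d_{\rm A}-d_{\rm M}$ normal directions. Along the normal directions, the restriction of $\mathcal{V}$ has a nondegenerate (positive-definite) Hessian — this is precisely the content of the Morse-Bott Degeneracy proposition together with the fact that $\mathcal{V}_0$ vanishes to exactly second order transversally (by the Jacobian rank count $\rank(J_{P_0})=d_{\rm A}-d_{\rm M}$ from Theorem~\ref{thm:jacobian_singularity}). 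Gaussian integration over each normal direction then yields a factor $\sqrt{2\pi T/\kappa_j}$ for the $j$-th normal curvature eigenvalue, while the tangential integration contributes the Riemannian volume $\Vol(V(P))$ to leading order.

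Carrying this out gives
\begin{equation}
Z(T) = (2\pi T)^{(d_{\rm A}-d_{\rm M})/2}\left(\int_{V(P)} \frac{dS}{\sqrt{\det \Hess_\perp \mathcal{V}}}\right)\bigl(1 + O(T)\bigr),
\end{equation}
so that $F(T) = -T\ln Z(T) = -\tfrac{d_{\rm A}-d_{\rm M}}{2}\,T\log(2\pi T) - T\log C + O(T^2)$ for a geometric constant $C$ encoding $\Vol(V(P))$ and the normal curvatures. Differentiating, $S(T) = -\partial F/\partial T = \tfrac{d_{\rm A}-d_{\rm M}}{2}\log T + \log\Vol(V(P)) + O(1)$, where the $O(1)$ absorbs the constants $\tfrac{d_{\rm A}-d_{\rm M}}{2}\log(2\pi)$, the curvature-determinant contribution, and the derivative-of-$T$ terms; this is case~(1). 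For case~(2), $d_{\rm M}=0$: $V(P)$ is a finite set of isolated points, each a nondegenerate minimum of $\mathcal{V}$ (the Hessian is full-rank $d_{\rm A}$), so ordinary Laplace asymptotics give $Z(T) = (2\pi T)^{d_{\rm A}/2}\sum_i (\det\Hess\,\mathcal{V}(x_i^*))^{-1/2}(1+O(T))$, whence $S(T) = \tfrac{d_{\rm A}}{2}\log T + O(1)$, with the finite sum and log-cardinality folded into $O(1)$.

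The main obstacle is justifying the Morse-Bott structure rigorously: one must verify that $V(P)$ is a smooth compact manifold along which $\mathcal{V}$ attains its minimum with transversally nondegenerate Hessian, i.e. that there are no flat normal directions. For the central case this follows from the explicit sphere description $V_0\cong S^{d_{\rm A}-2}$ (Corollary~\ref{cor:inflation_law}) combined with the rank statement of Theorem~\ref{thm:jacobian_singularity}: $\Ker(J_{P_0})$ has dimension exactly $d_{\rm M}$ and coincides with $T_x V_0$, so the Hessian of $\mathcal{V}_0=\|P_0\|^2$, which equals $2\,J_{P_0}^{\!\top}J_{P_0}$ at a root, is positive-definite on the normal space. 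One also needs the standard tail estimate — that the contribution to $Z(T)$ from outside the tubular neighborhood is exponentially small in $1/T$ — which is immediate from coercivity of $\mathcal{V}$ and compactness of $V(P)$. A secondary technical point is handling the non-orientability of $V(P)$ in general (e.g. $\R\mathbb{P}$-type issues do not arise here since our manifolds are spheres, but for full generality one works with densities rather than forms); this is routine and contributes only to the $O(1)$ term, so I would relegate it to a remark. Assembling these pieces yields both scaling laws as stated.
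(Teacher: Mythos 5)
Your argument is essentially the same Laplace/Morse--Bott localization that the paper's proof sketch uses, but fleshed out with the details the sketch omits: the explicit tubular-neighborhood decomposition into tangential and normal coordinates, the Gaussian integration over the normal fiber producing the $(2\pi T)^{(d_{\rm A}-d_{\rm M})/2}$ factor, the identification of the transversal Hessian as $2\,J_{P_0}^{\!\top}J_{P_0}$ at a root (which correctly explains why $\Ker \Hess \mathcal{V}_0 = \Ker J_{P_0}$ and why the normal Hessian is positive-definite), the exponential tail estimate from coercivity, and the careful bookkeeping of which constants get absorbed into $O(1)$. The paper jumps directly to $Z(T)\propto\Vol(V(P))\,T^{(d_{\rm A}-d_{\rm M})/2}$ without explaining how the curvature-weighted fiber integral is separated from the volume, which you handle by noting that $\log\int_{V(P)} (\det\Hess_\perp\mathcal{V})^{-1/2}\,dS = \log\Vol(V(P)) + O(1)$ by compactness and uniform nondegeneracy of the normal Hessian. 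Two small points worth flagging: (i) when $V(P)$ has connected components of mixed dimension (a sphere plus isolated real roots, as happens for trinomials with one real auxiliary root), the leading $T\to 0^+$ asymptotics are governed by the highest-dimensional component, since $T^{(d_{\rm A}-d_{\rm M})/2}$ dominates $T^{d_{\rm A}/2}$; neither you nor the paper makes this explicit, though it is implicit in the statement ``$d_{\rm M} = \dim$ of the root manifold.'' (ii) Your appeal to Theorem~\ref{thm:jacobian_singularity} to conclude $\dim\Ker J_{P_0} = d_{\rm M}$ (rather than merely $\ge d_{\rm M}$) is the genuinely nontrivial ingredient, and it is good that you flag it as ``the main obstacle''; for spherical $V_0$ this can be checked directly from the orbit description, which you note. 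Overall this is a correct and more rigorous rendering of the paper's argument.
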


\begin{proof}[Proof Sketch]
We analyze the asymptotic behavior of $Z(T)$ using Laplace's method.

\textbf{Case 1 ($d_{\rm M}=0$).} Morse minima. $Z(T) \propto T^{d_{\rm A}/2}$.

\textbf{Case 2 ($d_{\rm M}>0$).} Morse-Bott minima. $Z(T) \propto \Vol(V(P)) \cdot T^{(d_{\rm A}-d_{\rm M})/2}$.

\textbf{Entropy Calculation:}
Let $Z(T) \approx C T^\alpha$, where $\alpha = (d_{\rm A}-d_{\rm M})/2$.
$S(T) = \ln Z(T) + T(Z'(T)/Z(T)) \approx \alpha \log T + \log C + \alpha$. The leading term behavior and the constant terms establish the scaling law.
\end{proof}

\begin{corollary}[Collapse as Entropy Reduction]
The topological collapse ($d_{\rm M} \to 0$) corresponds to a reduction in the entropy of the system in the low temperature limit.
\end{corollary}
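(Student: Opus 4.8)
The plan is to read the corollary directly off the Entropy Scaling Law (Proposition~\ref{prop:entropy_scaling}) by comparing its two asymptotic regimes. Write $S_0(T)$ for the entropy of the pre-collapse (central) system, whose root variety $V_0$ is a manifold of dimension $d_{\rm M}>0$, and $S_\epsilon(T)$ for the entropy after a generically non-central deformation ($\epsilon\neq 0$), for which the roots are isolated ($d_{\rm M}=0$). Both systems live in the same algebra, so the ambient dimension $d_{\rm A}=\dim(\A)$ is shared between the two cases.

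First I would record the two expansions furnished by Proposition~\ref{prop:entropy_scaling}, valid as $T\to 0^+$:
\[
S_0(T) = \frac{d_{\rm A}-d_{\rm M}}{2}\log T + \log\Vol(V_0) + O(1), \qquad S_\epsilon(T) = \frac{d_{\rm A}}{2}\log T + O(1).
\]
Subtracting, the coefficients of $\log T$ differ by exactly $-d_{\rm M}/2$, so
\[
S_0(T) - S_\epsilon(T) = -\frac{d_{\rm M}}{2}\log T + \log\Vol(V_0) + O(1).
\]
Since $d_{\rm M}>0$ and $\log T\to -\infty$ as $T\to 0^+$, the term $-\tfrac{d_{\rm M}}{2}\log T$ diverges to $+\infty$ while the remaining terms stay bounded; hence $\lim_{T\to 0^+}\bigl(S_0(T)-S_\epsilon(T)\bigr)=+\infty$, and in particular there is a threshold $T^*>0$ with $S_0(T)>S_\epsilon(T)$ for every $0<T<T^*$. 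This is precisely the assertion that the collapse $d_{\rm M}\to 0$ reduces the low-temperature entropy, and it quantifies the loss: the deficit grows like $\tfrac{d_{\rm M}}{2}\lvert\log T\rvert$, i.e.\ one half-unit of logarithmic entropy per collapsed manifold dimension.

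The one point requiring care — the nearest thing to an obstacle — is checking that the subleading constants really are dominated by the divergent $\log T$ term. In the collapsed phase $V_\epsilon$ typically consists of several isolated roots, each contributing a Laplace prefactor $(\det\Hess)^{-1/2}$, while the central phase contributes the volume normalisation $\Vol(V_0)$ (an $S^2$ or $S^6$); all of these are finite and independent of $T$, and by Proposition~\ref{prop:morse_lifting} the restricted potential is Morse so the relevant Hessians are nondegenerate, ensuring the $O(1)$ bounds in Proposition~\ref{prop:entropy_scaling} are uniform in a neighbourhood of $T=0$. Finally, one should record that $S_0(T)$ and $S_\epsilon(T)$ each tend to $-\infty$ individually (the familiar feature of differential entropy for continuum ensembles), so ``reduction'' is to be understood — as above — in the sense that the \emph{difference} diverges with the correct sign; no estimate beyond the scaling law itself is needed.
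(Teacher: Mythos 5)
Your proposal is correct and follows precisely the route the paper intends: the corollary is stated with no separate proof because it is meant to be read off the Entropy Scaling Law (Proposition~\ref{prop:entropy_scaling}) exactly as you do, by comparing the $\log T$ coefficients $\tfrac{d_{\rm A}-d_{\rm M}}{2}$ versus $\tfrac{d_{\rm A}}{2}$ and noting that the difference $-\tfrac{d_{\rm M}}{2}\log T\to+\infty$ as $T\to 0^+$. Your additional remarks — that the $O(1)$ terms are bounded via the Morse/Morse-Bott nondegeneracy of Proposition~\ref{prop:morse_lifting}, and that ``reduction'' must be read as a divergent difference since each entropy individually tends to $-\infty$ — are sound clarifications that go slightly beyond what the paper records but do not change the substance.
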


\subsection{Topological Collapse as a Phase Transition}

We analyze the topological collapse as a phase transition characterized by symmetry breaking.

\begin{definition}[Alignment Order Parameter]\label{def:order_parameter}
Let $\nu$ be a unit imaginary vector defining the perturbation axis. The alignment order parameter $m_\nu$ is the normalized mean square alignment of the coordinate along $\nu$ under the Gibbs measure. For $\HH$ ($d_{\rm A}=4$) aligned along $i$ (coordinate $b$):
\begin{equation}
m(\epsilon, T) = \frac{\langle b^2 \rangle_{\mu_T(\epsilon)}}{\langle b^2+c^2+d^2 \rangle_{\mu_T(\epsilon)}}.
\end{equation}
\end{definition}

\begin{theorem}[Symmetry Breaking and Phase Transition]\label{thm:phase_transition}
Consider a perturbation $P_\epsilon(x)$ breaking the symmetry along the $\nu$-axis. We analyze the behavior in the zero temperature limit ($T \to 0$). (Part 4 of Main Theorem~\ref{thm:main_bifurcation}).
\begin{enumerate}
    \item \textbf{Symmetric Phase (Disordered):} For $\epsilon=0$, the measure concentrates uniformly on $S^{d_{\rm A}-2}$. $m(0, 0) = 1/(d_{\rm A}-1)$.
    \item \textbf{Ordered Phase (Aligned):} For $\epsilon \neq 0$, the measure concentrates on the isolated roots aligned with the $\nu$-axis. $m(\epsilon, 0) = 1$.
\end{enumerate}
\end{theorem}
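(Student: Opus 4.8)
The plan is to establish the two values of the order parameter separately, both in the $T \to 0$ limit, by combining the zero-temperature concentration result (Proposition~\ref{prop:zero_temp_limit}) with the symmetry/localization structure already available. The only genuine content is identifying, for each $\epsilon$, the support of the limiting measure and then computing the ratio $\langle b^2\rangle / \langle b^2+c^2+d^2\rangle$ against that support.

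For Part 1 ($\epsilon = 0$), I would first invoke Proposition~\ref{prop:zero_temp_limit} to conclude that $\mu_T$ converges weakly to the measure concentrated uniformly on $V(P_0) = V_0 \cong S^{d_{\rm A}-2}$, the sphere of purely imaginary roots at the relevant radius $R$. By the Generalized Inflation Theorem (Theorem~\ref{thm:generalized_inflation}), this sphere is an orbit of $G = \Aut(\A)$, so the limiting measure is the unique $G$-invariant probability measure on it. The key step is then a symmetry averaging argument: since $G$ (which is $SO(3)$ for $\HH$, $G_2$ for $\OO$) acts transitively on the unit sphere in the imaginary part $\mathrm{Im}(\A) \cong \R^{d_{\rm A}-1}$, the second-moment tensor $\langle x_p x_q \rangle$ restricted to the imaginary coordinates must be a $G$-invariant symmetric bilinear form, hence a scalar multiple of the identity: $\langle x_p x_q \rangle = \tfrac{1}{d_{\rm A}-1}\langle \|\mathrm{Im}(x)\|^2\rangle\,\delta_{pq}$. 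Setting $x_p = b$ (the $i$-coordinate) and summing $p=q$ over the $d_{\rm A}-1$ imaginary directions in the denominator gives immediately $m(0,0) = 1/(d_{\rm A}-1)$. I would phrase this cleanly for $\HH$ with $d_{\rm A}=4$ (giving $1/3$, consistent with the $S^2$ picture) and remark that the identical argument with $G_2$ acting transitively on $S^6 \subset \mathrm{Im}(\OO)$ gives $1/7$ for the octonions.

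For Part 2 ($\epsilon \neq 0$), again by Proposition~\ref{prop:zero_temp_limit} the limiting measure concentrates on $V(P_\epsilon)$, which for a generically non-central deformation consists of finitely many isolated roots (by the Generalized Symmetry Reduction Theorem~\ref{thm:generalized_symmetry_reduction}, since $G_{P_\epsilon}$ is trivial). Here the Localization Theorem (Theorem~\ref{thm:algebraic_alignment}) does the work: every isolated root lies in the coefficient subalgebra $\A(P_\epsilon)$, and for a perturbation aligned along the single axis $\nu = i$, we have $\A(P_\epsilon) \subseteq \R \oplus \R i \cong \C$. Hence every isolated root $x^*$ has the form $a + b\,i$ with $c = d = 0$ (and all further imaginary coordinates vanishing in $\OO$). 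Therefore, pointwise on the support, $b^2 + c^2 + d^2 = b^2$, and the ratio in Definition~\ref{def:order_parameter} equals $1$ on the support (excluding the measure-zero event that a root is real, i.e. $b=0$; for a generic perturbation at least one root has $b \neq 0$, and if one wishes to be careful one restricts the ensemble to the non-real roots, or notes the genuinely relevant minimum is non-real by construction of the deformation). Taking $T \to 0$ then gives $m(\epsilon, 0) = 1$.

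Finally I would assemble the two computations into the statement: the function $m(\epsilon, T)$ has limit $1/(d_{\rm A}-1)$ as $(\epsilon,T) \to (0,0)$ along $\epsilon = 0$, and limit $1$ as $(\epsilon, T)\to(0,0)$ along any path with $\epsilon \neq 0$ first then $T\to 0$; since $1/(d_{\rm A}-1) \neq 1$ for $d_{\rm A} \geq 2$, the order parameter is discontinuous at $(0,0)$, which is the asserted phase transition. The main obstacle is not any hard estimate but rather a careful treatment of the $T\to 0$ concentration: Proposition~\ref{prop:zero_temp_limit} gives weak convergence, and one must justify that the bounded continuous (indeed, the order parameter is a ratio of polynomially-growing moments, so one needs uniform integrability / a tightness bound from coercivity of $\mathcal{V}$) observables $b^2$ and $b^2+c^2+d^2$ actually converge to their integrals against the limiting measure — this requires the standard Laplace/uniform-integrability argument using coercivity of $\mathcal{V}_\epsilon$ to control the tails uniformly in $T$, which I would state as a lemma and prove by the dominated-convergence argument behind Laplace's method.
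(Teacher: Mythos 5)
Your proposal follows essentially the same route as the paper: invoke the zero-temperature concentration of the Gibbs measure on $V(P)$, then for $\epsilon = 0$ use the $G$-invariance of the uniform measure on $S^{d_{\rm A}-2}$ to equalize the second moments of the imaginary coordinates (giving $m = 1/(d_{\rm A}-1)$), and for $\epsilon \neq 0$ invoke the Localization Theorem (Theorem~\ref{thm:algebraic_alignment}) to confine the isolated roots to the subalgebra $\R \oplus \R\nu$ (giving $m = 1$). Your closing observation — that weak convergence of $\mu_T$ does not by itself yield convergence of the unbounded second moments, so one needs a uniform-integrability/tightness estimate from coercivity of $\mathcal{V}_\epsilon$ — is a correct and worthwhile refinement that the paper's proof leaves implicit.
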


\begin{proof}
As $T \to 0$, the measure concentrates on $V(P)$.
For $\epsilon=0$, by symmetry, the expectation of the square of any imaginary coordinate is equal. There are $d_{\rm A}-1$ imaginary dimensions. Thus $m(0,0)=1/(d_{\rm A}-1)$. (In $\HH$, $1/3$; in $\OO$, $1/7$).
For $\epsilon \neq 0$, the Localization Theorem (Theorem~\ref{thm:algebraic_alignment}) implies the ground state localizes on the axis defined by the subalgebra (e.g., the $\nu$-axis), so $m(\epsilon, 0)=1$.
\end{proof}

\noindent The topological collapse is characterized as a phase transition at $T=0$, where the order parameter changes discontinuously at $\epsilon=0$.

\begin{proposition}[Smoothing and Statistical Symmetry Restoration]
At $T>0$, the sharp phase transition is replaced by a smooth crossover. If $T \gg \epsilon$, the noise dominates, $m(\epsilon, T) \approx 1/(d_{\rm A}-1)$. The symmetry is statistically restored by thermal fluctuations.
\end{proposition}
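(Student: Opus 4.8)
The plan is to establish the statement in two parts: (i) for every $T>0$, the alignment order parameter $m(\epsilon,T)$ (Definition~\ref{def:order_parameter}) is a real-analytic function of $(\epsilon,T)$, so the jump produced by Theorem~\ref{thm:phase_transition} is confined to $T=0$ and is replaced there by a crossover; and (ii) a quantitative estimate showing $m(\epsilon,T)\to 1/(d_{\rm A}-1)$ whenever $\epsilon/T\to0$. For (i), fix a compact set $K=\{|\epsilon|\le\epsilon_0\}\times[T_1,T_2]\subset\R\times(0,\infty)$. Writing, with the real inner product $\langle\cdot,\cdot\rangle$ attached to the norm, $\mathcal{V}_\epsilon(x)=\|P_0(x)\|^2+2\epsilon\langle P_0(x),\Delta P(x)\rangle+\epsilon^2\|\Delta P(x)\|^2$, one sees that for $\epsilon_0$ small $\mathcal{V}_\epsilon$ keeps the top-degree behaviour $\mathcal{V}_\epsilon(x)\sim\|x\|^{2\deg P_0}$ of $\mathcal{V}_0$, so $\exp(-\mathcal{V}_\epsilon(x)/T)\le\exp(-c\|x\|^{2\deg P_0}+C)$ uniformly on $K$ — an integrable majorant; the integrand is moreover jointly holomorphic in the parameters on a complex neighbourhood of $K$, with a locally uniform integrable bound. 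Hence $Z(T,\epsilon)$ and the numerators $\int_\A b^2 e^{-\mathcal{V}_\epsilon/T}\,dx$ and $\int_\A(b^2+c^2+d^2)e^{-\mathcal{V}_\epsilon/T}\,dx$ are positive, finite, and real-analytic in $(\epsilon,T)$ on $\R\times(0,\infty)$, and so is their quotient $m(\epsilon,T)$. In particular $m$ is $C^\omega$ there, so the transition of Theorem~\ref{thm:phase_transition} is smoothed whenever $T>0$.

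For (ii), note first that at $\epsilon=0$ the potential $\mathcal{V}_0$ is $\Aut(\A)$-invariant: $P_0\in\R[x]$ commutes with every automorphism (power-associativity together with $g(a_k)=a_k$), and automorphisms of normed division algebras are isometries, so $\mathcal{V}_0\circ g=\mathcal{V}_0$; since $dx$ is also $\Aut(\A)$-invariant, $\mu_T(0)$ is $\Aut(\A)$-invariant for every $T>0$. As $\Aut(\HH)=SO(3)$ and $\Aut(\OO)=G_2$ act irreducibly on $\Img(\A)$, the second-moment form of the imaginary coordinates under $\mu_T(0)$ is an invariant symmetric bilinear form, hence a scalar multiple of the standard one; so those $d_{\rm A}-1$ moments are all equal and $m(0,T)=1/(d_{\rm A}-1)$ identically in $T$. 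To pass to $\epsilon\ne0$, set $g(x)=\langle P_0(x),\Delta P(x)\rangle$ and $h(x)=\|\Delta P(x)\|^2$; then for any observable $f$
\[
\langle f\rangle_{\mu_T(\epsilon)}=\frac{\big\langle f\,e^{-(2\epsilon g+\epsilon^2 h)/T}\big\rangle_{\mu_T(0)}}{\big\langle e^{-(2\epsilon g+\epsilon^2 h)/T}\big\rangle_{\mu_T(0)}}.
\]
Expanding the exponentials — legitimate since the $\mu_T(0)$-moments of $g,h$ are finite by the coercivity tail bound on $\mathcal{V}_0$ — and substituting $x=T^{1/(2\deg P_0)}u$, which normalizes the confining term to $\|u\|^{2\deg P_0}$, makes the exponent $(2\epsilon g+\epsilon^2 h)/T$ of size $O\!\big(\epsilon\,T^{(\deg\Delta P-\deg P_0)/(2\deg P_0)}\big)$ times a $u$-polynomial with bounded $\mu_T(0)$-moments; hence every expectation differs from its value at $\epsilon=0$ by a quantity that vanishes as $\epsilon/T\to0$ (with a genuine negative power of $T$ when $\deg\Delta P<\deg P_0$, as in the canonical perturbation $P_1(q)=q^2+iq+1$). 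Taking $f=b^2$ and $f=b^2+c^2+d^2$, forming the ratio and subtracting $m(0,T)=1/(d_{\rm A}-1)$, we get $m(\epsilon,T)\to 1/(d_{\rm A}-1)$: thermal noise erases the alignment and statistically restores the symmetry, while letting $T\to0$ recovers the discontinuity of Theorem~\ref{thm:phase_transition} through Proposition~\ref{prop:zero_temp_limit}.

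The step I expect to be the main obstacle is the uniformity in (ii): the $\mu_T(0)$-moments $\langle|g|^k\rangle$ themselves grow as $T\to\infty$, because $\mu_T(0)$ spreads out and $g$ is a polynomial, so the powers of $T$ in the prefactors $(\epsilon/T)^k$ and in the moments must be carefully weighed against one another; the rescaling $x=T^{1/(2\deg P_0)}u$ is precisely the device that exposes this balance, and the degree inequality $\deg\Delta P\le\deg P_0$ (equality allowed, strict inequality giving a sharper estimate) is exactly what prevents the net exponent of $T$ in the correction from becoming positive. A routine secondary check is the coercivity of $\mathcal{V}_\epsilon$ when $\Delta P$ disturbs the leading coefficient of $P_0$: the top-degree coefficient of $\mathcal{V}_\epsilon$ is then $1+O(\epsilon^2)$, positive only for bounded $|\epsilon|$, so the majorant of part (i) must be taken over that $\epsilon$-range.
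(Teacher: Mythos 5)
The paper itself offers no proof of this proposition; it is asserted as a physical consequence of the entropic heuristic in the sentence following Figure~\ref{fig:thermal_analysis}, which invokes Proposition~\ref{prop:entropy_scaling} (high temperature favours the high-volume spherical component) versus the algebraic alignment constraint. Your argument is therefore a genuine rigorization and takes a rather different route. The part the paper leaves entirely implicit — why $m(\epsilon,T)$ should be close to the symmetric value — you settle by a clean representation-theoretic observation: $\mu_T(0)$ is exactly $\Aut(\A)$-invariant for every $T>0$ (not merely in the $T\to0^+$ limit of Theorem~\ref{thm:phase_transition}), and irreducibility of $SO(3)$ on $\Img\HH$ and of $G_2$ on $\Img\OO$ forces the second-moment form to be scalar, giving $m(0,T)=1/(d_{\rm A}-1)$ identically. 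Combined with real-analyticity of $Z$ and the moment integrals for $T>0$ (your part (i)), this both isolates the discontinuity at $T=0$ and gives a quantitative anchor for the crossover. This is sharper and more structural than the paper's entropic-volume heuristic, at the cost of invoking a degree hypothesis.

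Two places would need tightening before this is a complete proof. First, ``expanding the exponentials --- legitimate since the $\mu_T(0)$-moments of $g,h$ are finite'' is not quite enough: to control $\langle f\,e^{-(2\epsilon g+\epsilon^2 h)/T}\rangle_{\mu_T(0)}$ you need uniform integrability of $|f|\,e^{|(2\epsilon g+\epsilon^2 h)/T|}$ against $\mu_T(0)$, i.e.\ the tail of $e^{-\mathcal V_0/T}$ must beat $e^{C|\epsilon g|/T}$, which again is exactly the degree condition $\deg(P_0\cdot\Delta P)<2\deg P_0$, that is $\deg\Delta P<\deg P_0$; a first-order Taylor estimate with remainder plus dominated convergence replaces the formal series. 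Second, your rescaling $x=T^{1/(2\deg P_0)}u$ is tuned for $T\to\infty$: when $\epsilon/T\to0$ with both $\epsilon,T\to0$ the power of $T$ it produces has the wrong sign, and one should instead run Laplace's method near $V_0$, where Lemma~\ref{lemma:potential_scaling} gives $\mathcal V_\epsilon|_{V_0}=\epsilon^2\|\Delta P\|^2$ and the relevant smallness parameter is $\epsilon^2/T$, which indeed vanishes when $\epsilon/T\to0$ and $\epsilon\to0$. Neither point is a fatal gap --- the canonical perturbation $P_1(q)=q^2+iq+1$ has $\deg\Delta P=1<2=\deg P_0$, and the $T\to0$ regime is covered by your part (i) once $m(0,T)$ is pinned down --- but the proposition as stated in the paper does not announce the degree restriction, so you should flag it as a hypothesis you are adding.
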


\begin{figure}[htbp]
    \centering
    \includegraphics[width=\textwidth]{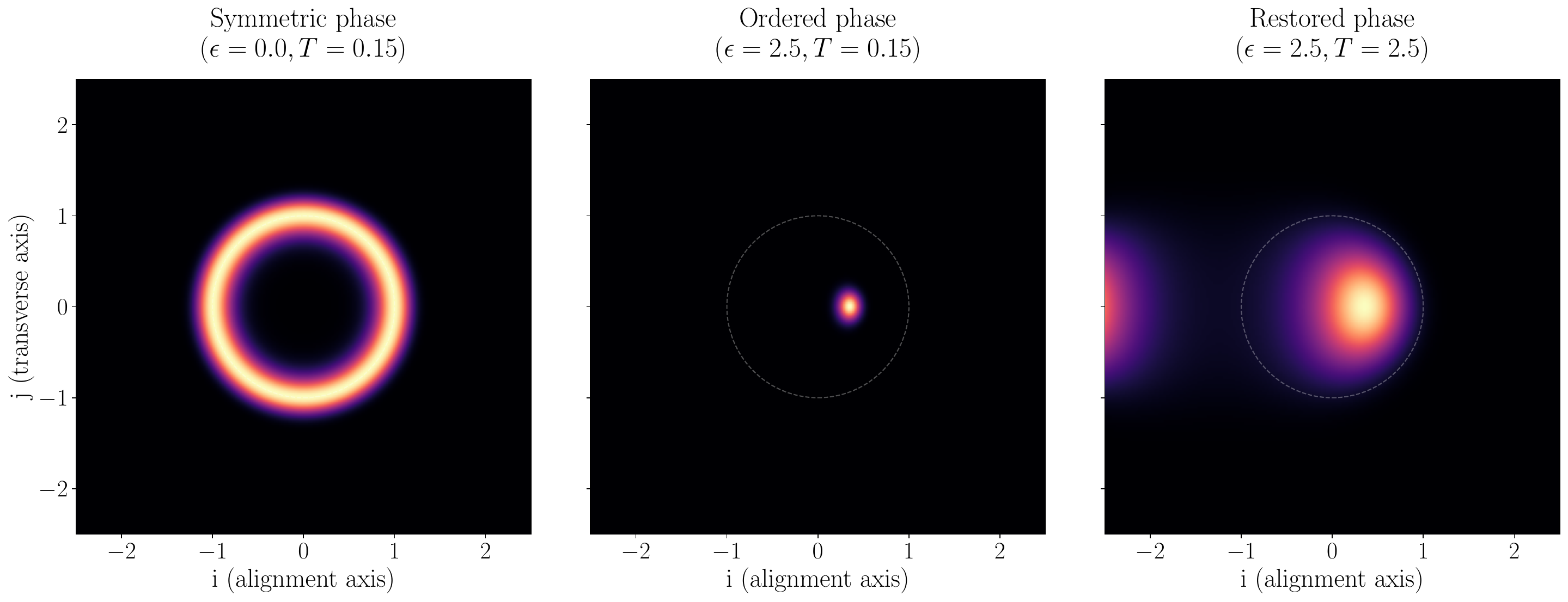}
    \caption{Thermodynamic Phase Transition and Statistical Symmetry Restoration (in $\HH$). Visualization of the Gibbs probability measure $\mu_T$. (Left) Symmetric Phase ($\epsilon=0$). (Center) Ordered Phase ($\epsilon=2.5, T=0.15$). (Right) Restored Phase ($\epsilon=2.5, T=2.50$).}
    \label{fig:thermal_analysis}
\end{figure}

The reversal of topological collapse at high temperature (Figure~\ref{fig:thermal_analysis}, Right) illustrates the competition between entropic forces (favoring the high-volume sphere, Proposition~\ref{prop:entropy_scaling}) and algebraic constraints.

\section{Discussion and Outlook}

This work establishes a framework for Dynamical Non-Commutative Algebraic Geometry (DNCAG). We have demonstrated that non-commutativity induces dimensional inflation, characterized the dynamics of these manifolds under central modulation, and analyzed the mechanism of topological collapse under non-central perturbations using gradient flow and thermodynamic formalisms.

The analysis reveals a unified picture across the real normed division algebras:
\begin{enumerate}
    \item The geometry of central roots is determined by the automorphism group of the algebra (Generalized Inflation Theorem).
    \item Central dynamics are governed by the auxiliary polynomial and its discriminant, independent of associativity (due to power-associativity).
    \item Topological collapse is driven by symmetry reduction (Generalized Symmetry Reduction Theorem) and the Alignment Principle (Localization Theorem), which holds due to alternativity.
    \item The dynamics of collapse follow a gradient flow characterized by Critical Slowing Down ($T \propto \epsilon^{-2}$).
    \item The collapse can be rigorously characterized as a thermodynamic phase transition (Entropy Scaling Law and Order Parameter analysis).
\end{enumerate}

Future research will focus on the detailed classification of the $G_2$ collapse hierarchy in $\mathbb{O}$, utilizing the intermediate subalgebras (e.g., $\HH$ within $\OO$), and the extension of this framework to non-division algebras such as Clifford and Matrix algebras.

\backmatter


\begin{thebibliography}{11}
\ifx \bisbn   \undefined \def \bisbn  #1{ISBN #1}\fi
\ifx \binits  \undefined \def \binits#1{#1}\fi
\ifx \bauthor  \undefined \def \bauthor#1{#1}\fi
\ifx \batitle  \undefined \def \batitle#1{#1}\fi
\ifx \bjtitle  \undefined \def \bjtitle#1{#1}\fi
\ifx \bvolume  \undefined \def \bvolume#1{\textbf{#1}}\fi
\ifx \byear  \undefined \def \byear#1{#1}\fi
\ifx \bissue  \undefined \def \bissue#1{#1}\fi
\ifx \bfpage  \undefined \def \bfpage#1{#1}\fi
\ifx \blpage  \undefined \def \blpage #1{#1}\fi
\ifx \burl  \undefined \def \burl#1{\textsf{#1}}\fi
\ifx \doiurl  \undefined \def \doiurl#1{\url{https://doi.org/#1}}\fi
\ifx \betal  \undefined \def \betal{\textit{et al.}}\fi
\ifx \binstitute  \undefined \def \binstitute#1{#1}\fi
\ifx \binstitutionaled  \undefined \def \binstitutionaled#1{#1}\fi
\ifx \bctitle  \undefined \def \bctitle#1{#1}\fi
\ifx \beditor  \undefined \def \beditor#1{#1}\fi
\ifx \bpublisher  \undefined \def \bpublisher#1{#1}\fi
\ifx \bbtitle  \undefined \def \bbtitle#1{#1}\fi
\ifx \bedition  \undefined \def \bedition#1{#1}\fi
\ifx \bseriesno  \undefined \def \bseriesno#1{#1}\fi
\ifx \blocation  \undefined \def \blocation#1{#1}\fi
\ifx \bsertitle  \undefined \def \bsertitle#1{#1}\fi
\ifx \bsnm \undefined \def \bsnm#1{#1}\fi
\ifx \bsuffix \undefined \def \bsuffix#1{#1}\fi
\ifx \bparticle \undefined \def \bparticle#1{#1}\fi
\ifx \barticle \undefined \def \barticle#1{#1}\fi
\bibcommenthead
\ifx \bconfdate \undefined \def \bconfdate #1{#1}\fi
\ifx \botherref \undefined \def \botherref #1{#1}\fi
\ifx \url \undefined \def \url#1{\textsf{#1}}\fi
\ifx \bchapter \undefined \def \bchapter#1{#1}\fi
\ifx \bbook \undefined \def \bbook#1{#1}\fi
\ifx \bcomment \undefined \def \bcomment#1{#1}\fi
\ifx \oauthor \undefined \def \oauthor#1{#1}\fi
\ifx \citeauthoryear \undefined \def \citeauthoryear#1{#1}\fi
\ifx \endbibitem  \undefined \def \endbibitem {}\fi
\ifx \bconflocation  \undefined \def \bconflocation#1{#1}\fi
\ifx \arxivurl  \undefined \def \arxivurl#1{\textsf{#1}}\fi
\csname PreBibitemsHook\endcsname

\bibitem[\protect\citeauthoryear{Niven}{1941}]{niven1941equations}
\begin{barticle}
\bauthor{\bsnm{Niven}, \binits{I.}}:
\batitle{Equations in quaternions}.
\bjtitle{The American Mathematical Monthly}
\bvolume{48}(\bissue{10}),
\bfpage{654}--\blpage{661}
(\byear{1941})
\doiurl{10.1080/00029890.1941.11991158}
{\href{https://arxiv.org/abs/https://doi.org/10.1080/00029890.1941.11991158}{{https://doi.org/10.1080/00029890.1941.11991158}}}
\end{barticle}
\endbibitem

\bibitem[\protect\citeauthoryear{Eilenberg and
  Niven}{1944}]{eilenberg1944fundamental}
\begin{barticle}
\bauthor{\bsnm{Eilenberg}, \binits{S.}},
\bauthor{\bsnm{Niven}, \binits{I.}}:
\batitle{{The “fundamental theorem of algebra” for quaternions}}.
\bjtitle{Bulletin of the American Mathematical Society}
\bvolume{50}(\bissue{4}),
\bfpage{246}--\blpage{248}
(\byear{1944})
\end{barticle}
\endbibitem

\bibitem[\protect\citeauthoryear{Gordon and Motzkin}{1965}]{gordon1965zeros}
\begin{barticle}
\bauthor{\bsnm{Gordon}, \binits{B.}},
\bauthor{\bsnm{Motzkin}, \binits{T.S.}}:
\batitle{On the zeros of polynomials over division rings}.
\bjtitle{Transactions of the American Mathematical Society}
\bvolume{116},
\bfpage{218}--\blpage{226}
(\byear{1965})
\end{barticle}
\endbibitem

\bibitem[\protect\citeauthoryear{Lam}{2001}]{lam2001first}
\begin{bbook}
\bauthor{\bsnm{Lam}, \binits{T.-Y.}}:
\bbtitle{A First Course in Noncommutative Rings}
vol. \bseriesno{131}.
\bpublisher{Springer},
\blocation{University of California, Berkeley, CA 94720-0001}
(\byear{2001})
\end{bbook}
\endbibitem

\bibitem[\protect\citeauthoryear{Guckenheimer and
  Holmes}{1983}]{guckenheimer1983nonlinear}
\begin{bbook}
\bauthor{\bsnm{Guckenheimer}, \binits{J.}},
\bauthor{\bsnm{Holmes}, \binits{P.}}:
\bbtitle{Nonlinear Oscillations, Dynamical Systems, and Bifurcations of Vector
  Fields}
vol. \bseriesno{42}.
\bpublisher{Springer},
\blocation{New York}
(\byear{1983})
\end{bbook}
\endbibitem

\bibitem[\protect\citeauthoryear{Gerstenhaber}{1964}]{gerstenhaber1964deformation}
\begin{barticle}
\bauthor{\bsnm{Gerstenhaber}, \binits{M.}}:
\batitle{On the deformation of rings and algebras}.
\bjtitle{Annals of Mathematics}
\bvolume{79},
\bfpage{59}--\blpage{103}
(\byear{1964})
\end{barticle}
\endbibitem

\bibitem[\protect\citeauthoryear{Bott}{1954}]{bott1954nondegenerate}
\begin{barticle}
\bauthor{\bsnm{Bott}, \binits{R.}}:
\batitle{Nondegenerate critical manifolds}.
\bjtitle{Annals of Mathematics}
\bvolume{60}(\bissue{2}),
\bfpage{248}--\blpage{261}
(\byear{1954}).
Accessed 2025-11-24
\end{barticle}
\endbibitem

\bibitem[\protect\citeauthoryear{Adler}{1995}]{adler1995quaternionic}
\begin{bbook}
\bauthor{\bsnm{Adler}, \binits{S.L.}}:
\bbtitle{Quaternionic Quantum Mechanics and Quantum Fields}.
\bsertitle{International series of monographs on physics}.
\bpublisher{Oxford University Press},
\blocation{New York}
(\byear{1995})
\end{bbook}
\endbibitem

\bibitem[\protect\citeauthoryear{Baez}{2002}]{baez2002octonions}
\begin{barticle}
\bauthor{\bsnm{Baez}, \binits{J.C.}}:
\batitle{The octonions}.
\bjtitle{Bulletin of the American Mathematical Society}
\bvolume{39}(\bissue{2}),
\bfpage{145}--\blpage{205}
(\byear{2002})
\end{barticle}
\endbibitem

\bibitem[\protect\citeauthoryear{{\L}ojasiewicz}{1963}]{lojasiewicz1963propriete}
\begin{bchapter}
\bauthor{\bsnm{{\L}ojasiewicz}, \binits{S.}}:
\bctitle{Une propri{\'e}t{\'e} topologique des sous-ensembles analytiques
  r{\'e}els}.
In: \bbtitle{Les {\'E}quations aux D{\'e}riv{\'e}es partielles},
pp. \bfpage{87}--\blpage{89}.
\bpublisher{Editions du CNRS Paris},
\blocation{Paris}
(\byear{1963})
\end{bchapter}
\endbibitem

\bibitem[\protect\citeauthoryear{Schafer}{2011}]{schafer1995introduction}
\begin{bbook}
\bauthor{\bsnm{Schafer}, \binits{R.D.}}:
\bbtitle{An Introduction to Nonassociative Algebras}.
\bsertitle{Pure and Applied Mathematics}.
\bpublisher{Academic Press},
\blocation{New York}
(\byear{2011})
\end{bbook}
\endbibitem

\end{thebibliography}
\end{document}